
\documentclass[12pt]{article}

\usepackage{xcolor}
\usepackage{amssymb}
\usepackage{mathrsfs}

\usepackage{common}

\title{Stability over a predicate and prime closure}

\author{Alexander Usvyatsov\footnote{The author thanks the Austrian Science Foundation (FWF), projects P33895 and  P33420 for supporting this research.
    }}

\newcommand{\Addresses}{{
  \bigskip
  \footnotesize

 
 \medskip

Alexander Usvyatsov, \textsc{Institut f\"{u}r Diskrete Mathematik und Geometrie,
TU Wien,
1040 Vienna, Austria}
%
\medskip
%
%
%

}}
   
%
%
%
\pagestyle{myheadings} 
\topmargin -0.5in
\textwidth 5.875in
\textheight 8.75in

\baselineskip 24pt

\newtheorem{theorem}{Theorem}[section]
\newtheorem{definition}[theorem]{Definition}
\newtheorem{example}[theorem]{Example}
\newtheorem{lem}[theorem]{Lemma}
\newtheorem{obs}[theorem]{Observation}
\newtheorem{co}[theorem]{Corollary}

\newtheorem{hyp}[theorem]{Hypothesis}
\newtheorem{remark}[theorem]{Remark}
\newtheorem{pr}[theorem]{Proposition}

\newtheorem{conv}[theorem]{Convention}

\newtheorem{ft}[theorem]{Fact}
\renewenvironment{proof}{\noindent {\em Proof:}}{\hspace*{1cm}
        \hspace*{\fill}$\rule{1.2ex}{1.4ex}$\medskip}

\newenvironment{de}{\begin{definition}\rm}{\end{definition}}
\newenvironment{fact}{\begin{ft}\rm}{\end{ft}}

\newcommand{\red}[1]{{\color{black}{#1}}}

\date{}

\begin{document}


\maketitle

\abstract{We prove that in a theory $T$ stable over a predicate $P$, for any $\lam>|T|$, there is a $\lam$-prime model over any complete set $A$ with a $\lam$-saturated $P$-part.}

\section{Introduction}

This paper is concerned with further development of classification theory over a predicate. 

Specifically, our context is 
the following: $T$ is a complete first order theory, and $P$ is a distinguished unary predicate in the vocabulary of $T$. Our goal is understanding (and classifying) models $M \models T$ when a fixed $P^M = P^*$.  The idea is that $P$ itself can be very  complex, whereas ``over P'' the situation is nice and classifiable. 

To make things a bit more specific, let $(T,P)$ be such a pair. Let $\tau$ be the vocabulary of $T$; for simplicity, we shall assume that $\tau$ has no function symbols. Let $\cC$ be the monster model of $T$. We will denote the theory of $P$ (that is, the $\tau$-complete theory of $P^\cC$, which can be thought of as a substructure of $\cC$) by $T^P$. Now, given $M \models T$, we can ask: can one classify models ``over $P^M$''? Alternatively, what can be said about the class of models $N \models T$ with $P^N = P^M$? Even when there is no hope for classification of models of $T$ in general, this problem can have a simple and clear answer. 

One natural example of such a phenomenon is the theory $T$ of vector spaces over a field $K$, where $P$ is the field. We can think of its models as two-sorted structures $M = (K,V)$, where $P^M = K$. Clearly, even when $K$ completely  model theoretically intractable (e.g., $T^P = Th(\mathbb{Q})$), the class of vector spaces over a fixed field has a nice and simple  classification. 

Much work on classification over $P$ has  already been done by Hodges, Shelah, Pillay, and others: e.g., \cite{Hod-cat1,Hod-cat2,Hod-cat3, Pil-cat, PiSh130, Sh234}, and some instructive examples have been worked out in e.g. \cite{HaSh323, KiZil, Hen-thesis}. A more recent preprint of Shelah and the author \cite{ShUs322a} develops a general structure theory. In this paper, we are going to build and expand on the techniques developed in the latter paper. 

\smallskip

One interesting specific example to consider in this setting, studied by Chatzidakis in  \cite{Chatzidakis2020RemarksAT}, is the theory of algebraically closed fields of characteristic 0 with a generic automorphism $\sigma$, where $P$ is the fixed field of $\sigma$. This example is somewhat unusual, since, in this case, $P$  is actually model theoretically ``tame'' (it is pseudo-finite), and the full theory $T$ is also very well understood (it is simple, has QE, etc). Nevertheless, considering it in this new framework, offers further insight. In fact, ``over $P$'', one obtains (the appropriate version of) stability (even superstability), which allows for stronger classification results, once $P$ is fixed. 

For example, Chatzidakis has shown (Theorem 3.14 in \cite{Chatzidakis2020RemarksAT}) the existence of a $\ka$-prime  $\ka$-atomic model over an algebraically closed difference field of characteristic $0$ with a $\ka$-saturated and pseudo-finite $P$-part (for $\ka$ uncountable or $\aleph_\eps$). In this paper, we prove a general result of this type.

\medskip

Following \cite{ShUs322a}, we study consequences of the natural assumption that $T$ is stable over $P$; that is, that 
over any ``complete'' set, there are ``few'' types orthogonal to $P$ (see Definitions \ref{dfn:complete}, \ref{dfn:startypes}). Our main goal is the existence of prime models in this context. Specifically, assume that $M \models T$. We can ask, whether in the class of models $\set{N\models T \colon P^N = P^M}$ there is a prime model. More generally, if, in addition, $P^M$ is $\lam$-saturated, we can ask the same question for the class of $\lam$-saturated models $N$ of $T$ with $P^N = P^M$. As mentioned above, Chatzitakis has shown that the answer is positive for $ACFA_0$ (over $P = Fix(\sigma)$) for $\lam > \aleph_0$ (including the case $\lam = \aleph_\eps$). This result is consistent with the classical theory of stability (one would expect a theorem along these lines for any class, considered to be ``superstable'' in some model theoretic context).  

In \cite{ShUs322a}, an analogous theorem was proven for complete sets with a saturated $P$-part. However, saturated models are hard to come by for unstable theories. Even the theory of pseudo-finite fields ($T^P$ in the case of $ACFA_0$) does not necessarily have saturated models (that is, it has a saturated model of an uncountable cardinality $\lam$ if and only if $\lam = \lam^{<\lam}$). It is therefore natural and interesting to consider the situation when $P^M$ is $\lam$-saturated, but is of (potentially) larger cardinality; for example, it is $\aleph_1$-saturated of cardinality the continuum. These cases are covered by the theory developed in \cite{Chatzidakis2020RemarksAT}   (in the case of $ACFA_0$), but can not be obtained just with the coarse methods used in  \cite{ShUs322a} (which rely on the $P$-part being truly saturated). 
%

In this paper, we provide a more nuanced analysis in order to prove a theorem analogous to the main result of \cite{Chatzidakis2020RemarksAT} in the general context. In order to do this, we need to make a stronger assumption than the one used in \cite{ShUs322a}: specifically, we assume that $T$ is ``fully'' stable over $P$. This means that all complete sets are stable over $P$, see Definitions  \ref{dfn:complete}, \ref{dfn:fullystable} (whereas in \cite{ShUs322a} it was enough to assume stability of models). However, we also obtain a much stronger result:  existence of $\lam$-prime models over complete sets $A$ with a \lam-saturated $P$-part for all $\lam>|T|$. 

We also work under the assumption that there exists a $\lam$-saturated model in the class $\set{N\models T \colon P^N = P^M}$ (otherwise, the question of existence of ``special'' models in this class becomes quite meaningless). This assumption holds for all models (and, in fact, for the class $\set{N\models T \colon P^N = P^A}$ for any complete set $A$ with a $\lam$-saturated $P^A$) in the case of $ACFA_0$ (by Lemma 3.2 in \cite{Chatzidakis2020RemarksAT}). 

Let us state a simplified version of our main result, which is easy to phrase (see Corollary \ref{cor:main}):

\begin{co}
	Let $T$ be fully stable over $P$, $A$ be a complete set (e.g., $A \models T$) with $P^A$ $\lam$-saturated for some $\lam>|T|$. Assume that the class of models 
	$$\mathcal{K} = \set{N\models T \colon P^N = P^A, N \text{ is } \lam-\text{saturated}}$$
	is non-empty. Then $\mathcal{K}$ has a prime member over $P$: that is, there exists $N_0 \in \mathcal{K}$, which is 
	elementarily embeddable into any $N \in \mathcal{K}$ over $P^A$. 
\end{co}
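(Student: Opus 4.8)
The plan is to derive the statement from our existence theorem for $\lam$-prime models (the main result announced in the abstract), applied not to the given set $A$ but to its pure $P$-part $P^A$. The first point to record is that $P^A$, viewed as a subset of $P^{\cC}$, is itself a complete set --- it coincides with its own $P$-part --- and that $P$-part is $P^A$, which is $\lam$-saturated by hypothesis; thus $P^A$ falls under the scope of the existence theorem. The reason for passing to $P^A$ rather than working over the whole of $A$ is that the members of $\mathcal{K}$ are only required to satisfy $P^N = P^A$ and need not contain $A$; hence the correct base over which to ask for primality is exactly $P^A = P^N$, which every $N \in \mathcal{K}$ does contain.

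Next, I would feed the non-emptiness hypothesis into the existence theorem. That theorem requires, as ambient object, a $\lam$-saturated model whose $P$-part is the prescribed $\lam$-saturated set; this is precisely the assumption $\mathcal{K} \neq \emptyset$. Granting it, the existence theorem produces a model $N_0$ that is $\lam$-prime over $P^A$. I then check that $N_0 \in \mathcal{K}$: by construction $N_0$ is $\lam$-saturated, and because the construction proceeds ``over $P$'' it introduces no new elements into the predicate, so $P^{N_0} = P^{P^A} = P^A$; in particular $N_0 \models T$ with the required $P$-part and the required saturation.

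Finally, I would verify the universality. Let $N \in \mathcal{K}$ be arbitrary. Then $N$ is $\lam$-saturated and $P^N = P^A$, so in particular $P^A \subseteq N$; that is, $N$ is a $\lam$-saturated model containing the base $P^A$ with $P$-part equal to $P^A$. This is exactly the class of structures into which a $\lam$-prime model over $P^A$ embeds elementarily over the base, so the defining property of $N_0$ yields an elementary embedding of $N_0$ into $N$ over $P^A$. As $N$ was arbitrary, $N_0$ is the desired prime member of $\mathcal{K}$ over $P$.

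The genuine difficulty is not in this derivation but in the existence theorem it invokes: constructing $N_0$ and showing it is simultaneously $\lam$-saturated, has $P$-part equal to the prescribed set, and carries the embedding property defining $\lam$-primeness. That construction is where full stability over $P$ --- the boundedness of the number of types orthogonal to $P$ over complete sets --- must be used, and it is the technical core of the paper. Within the corollary itself, the only points needing care are that $P^A$ is a legitimate complete set in the sense of Definition \ref{dfn:complete} and that being $\lam$-prime over $P^A$ matches membership in $\mathcal{K}$ together with containment of $P^A$; both are routine once the definitions are unwound.
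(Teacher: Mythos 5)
Your proposal is correct and is essentially the paper's own (implicit) derivation: take any $M \in \mathcal{K}$ as the ambient $\lam$-saturated model, apply Theorem~\ref{th:primary} with base $B = P^A$ to get $N_0 \subseteq M$ that is $\lam$-primary (hence, by Remark~\ref{rem:primary}, $\lam$-prime) over $P^A$, and note $P^A \subseteq P^{N_0} \subseteq P^M = P^A$ so that $N_0 \in \mathcal{K}$. The only cosmetic difference is your justification that the construction ``adds no new $P$-elements''; the simpler observation is that $N_0 \subseteq M$ forces $P^{N_0} \subseteq P^M$, but this does not affect correctness.
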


We intend to return to a more detailed discussion of superstability (as opposed to stability) and the case $\ka = \aleph_\eps$ in a future work.


\medskip

This paper is organized as follows. In Section 2 we set up the general context. Section 3 recalls some of the relevant definitions and results. Section 4 is devoted to proving the main results.

\subsection*{Acknowledgements} The author thanks Zo\'{e} Chatzidakis for inspiring discussions.

\section{The setting}

\begin{conv}
Let $T$ be a complete first order theory, $P$ a monadic predicate in
its vocabulary. For simplicity, we will assume that the language of $P$ does not contain function symbols (so 
that every subset of a model, containing all the constant symbols, is a substructure). 
\end{conv}

\noindent
Let $\mathcal{C} $ be the monster model of $T$. From now on, we assume that all models of $T$ are elementary submodels of $\mathcal{C}$, and all sets are subsets of $\mathcal{C}$. \medskip
 
For $M \models T$, we denote by  $M|_P$ the set $P^M$ viewed as a substructure of $M$. Similarly, for a subset $A \subseteq M$, we denote by $M|_A$ the substructure of $M$ with universe $A$. We write $A\equiv B$ if $Th({\cal C}|_A)=Th({\cal
C}|_B)$.

We also denote $T^P = Th(\cC|_P)$. For a set $A$, we denote $P^A = A \cap P^\cC$.

When no confusion should arise, we will write $P$ for $P^\cC$. Also, for a set $A$, we will often denote by $A$ both the set and the substructure of $\cC$ with universe $A$. So for example, when we write that $A\cap P^\cC$ is $\lam$-saturated, or just that $A\cap P$ is $\lam$-saturated, we mean that the substructure $\cC|_{A\cap P^\cC}$ is a $\lam$-saturated model of the appropriate theory (if $A \cap P \prec P$, which will be the case in this paper, then the appropriate theory is $T^P$).


%

\bigskip

Throughout the paper, we are going to make the following fundamental assumptions on $T$:

\begin{hyp}\label{asm:1} (\emph{\underline{Hypothesis 1})}

  
\begin{enumerate}
 \item Every type over $P^\mathcal{C}$ is definable. In other words, $P$ is \emph{stably embedded}: subsets of $P^\mathcal{C}$  that are definable (in $\mathcal{C}$) with parameters,  are definable in
$\mathcal{C}|_{P}$ with parameters from $P^\mathcal{C}$. 
\item  Moreover, subsets of $P^\mathcal{C}$  that are 0-definable (in $\mathcal{C}$),  are already  0-definable in
$\mathcal{C}|_{P}$. 
\item $T$ has quantifier elimination (even down to the level of predicates). 
 \end{enumerate}
\end{hyp}

 These assumptions are obviously very convenient, but there are also ``good'' reasons to assume them. First, they hold in many important and interesting examples. Second, it was shown by Pillay and Shelah in \cite{PiSh130} that these assumptions are in some sense ``justified'' in our context. Specifically, they have shown that one can deduce a non-structure result from the failure of Hypothesis (i) above (and in \cite{ShUs322a} it is explained why, if (i) holds, one may assume without loss of generality, that (ii) and (iii) hold as well) . Since we are interested in structure theory, we can assume that our $T$ falls on the ``good'' side of these dividing lines. See \cite{ShUs322a} for a more detailed discussion.

\section{Completeness, stability, and rank}

Let us recall some basic notions and results from \cite{PiSh130} and \cite{ShUs322a}.

\subsection{Completeness and relevant types}

In trying to reconstruct $M$ from $M| P^M$ one needs to work with sets $A$ satisfying
$P^M\subseteq A\subseteq M$. Such $A$ have the following property, that can be viewed as an analogue to
Tarski-Vaught Criterion for being an elementary submodel:

\begin{de}\label{6}\label{dfn:complete}
$A\subseteq {\cal C}$ is {\it complete} if for every
formula $\psi(\bar x,\bar y)$ and $\bar b\subseteq A, \models
(\exists\bar x\in P)\psi(\bar x,\bar b)$ implies $(\exists
\bar a\subseteq P\cap A)\models \psi(\bar a,\bar b)$. 
\end{de}

As we have noted above, the following is clear:

\begin{obs}\label{obs:complete}
If $M\prec {\cal C}$ and $P^M\subseteq A\subseteq M$, then
$A$ is complete.
\end{obs}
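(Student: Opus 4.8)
The plan is to verify the completeness condition directly from the Tarski–Vaught criterion for elementary submodels. So suppose $M \prec \cC$ and $P^M \subseteq A \subseteq M$. Let $\psi(\bar x, \bar y)$ be a formula and $\bar b \subseteq A$ parameters, and assume $\cC \models (\exists \bar x \in P)\,\psi(\bar x, \bar b)$. My goal is to produce a witness $\bar a \subseteq P \cap A = P^A$ with $\cC \models \psi(\bar a, \bar b)$.

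\smallskip

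\noindent First I would push the existential statement down into $M$. Since $\bar b \subseteq A \subseteq M$ and $M \prec \cC$, the sentence $(\exists \bar x \in P)\,\psi(\bar x, \bar b)$ — which is a first-order formula over parameters in $M$, namely $\exists \bar x\,\bigl(\bigwedge_i P(x_i) \wedge \psi(\bar x, \bar b)\bigr)$ — is preserved between $\cC$ and $M$. Hence $M \models (\exists \bar x \in P)\,\psi(\bar x, \bar b)$, and by elementarity the witnessing tuple can be chosen inside $M$: there is $\bar a \subseteq M$ with $\cC \models \psi(\bar a, \bar b)$ and, crucially, with each component satisfying $P$, so $\bar a \subseteq P^M$.

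\smallskip

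\noindent The final step is purely a matter of unwinding the hypothesis $P^M \subseteq A$. Since $\bar a \subseteq P^M \subseteq A$ and also $\bar a \subseteq P^\cC$, we get $\bar a \subseteq P^\cC \cap A = P^A = P \cap A$, which is exactly the containment required by Definition \ref{dfn:complete}. As $\cC \models \psi(\bar a, \bar b)$, this $\bar a$ is the desired witness, and completeness of $A$ follows.

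\smallskip

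\noindent I do not anticipate a genuine obstacle here: the statement is essentially a reformulation of elementarity together with the containment $P^M \subseteq A$, so the only thing to be careful about is that the witness furnished by Tarski–Vaught lands in $P^M$ (not merely in $M$) — which is guaranteed because the existential quantifier in the formula is relativized to $P$. This is precisely why the hypothesis $P^M \subseteq A$ (rather than just $A \subseteq M$) is the operative assumption.
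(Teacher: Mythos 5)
Your proof is correct and is exactly the argument the paper has in mind: the paper states this observation without proof (``the following is clear''), relying on precisely the elementarity/Tarski--Vaught argument you spell out, where the relativized existential $(\exists \bar x \in P)\psi(\bar x,\bar b)$ transfers from $\cC$ to $M$, the witness lands in $P^M$ because the quantifier is relativized to $P$, and $P^M \subseteq A$ finishes the job.
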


Under the assumption of saturation of $P^M$, the above also characterizes complete sets; see Proposition \ref{8} below (but we will not use this in this paper). 

\medskip

The following useful characterization offers another understanding of the notion of completeness (see Observation 4.2 in \cite{ShUs322a}):

\begin{obs}
\label{obs:complete_characterization}
A set $A$ is complete if and only if for every $\bar a\subseteq A$ and
$\phi(\bar x,\bar y)$  the $\phi$-type $tp_\phi (\bar a/P^{\cal C})$ is definable
over $A\cap P^{\cal C}$ and $A\cap P^{\cal C}\prec P^{\cal C}$.
\end{obs}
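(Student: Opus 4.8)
The plan is to prove both implications, and in both directions the engine is the stable embeddedness of $P$ (Hypothesis \ref{asm:1}(i)). The one normalization I would set up first, and which I expect to be the crux, is this: by stable embeddedness every subset of $P^\cC$ that is $\cC$-definable with parameters is already definable in the induced structure $\cC|_P$ by a formula whose parameters lie in $P^\cC$. I would therefore agree once and for all to take all $\phi$-definitions below to be formulas of $\cC|_P$ (equivalently, $\tau$-formulas all of whose quantifiers are relativized to $P$), so that for such a $\vartheta(\bar x,\bar e)$ with $\bar e\subseteq P^\cC$ one has $\cC|_P\models \vartheta(\bar c,\bar e)$ iff $\cC\models\vartheta(\bar c,\bar e)$ for every $\bar c\subseteq P^\cC$. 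This is what lets me feed the defining formulas into the elementarity statement $A\cap P^\cC\prec P^\cC$, whose meaning refers to $\cC|_P$; getting this interface right is the only genuinely delicate point, the rest being Tarski--Vaught bookkeeping.

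For the forward direction, assume $A$ is complete (Definition \ref{dfn:complete}), fix $\bar a\subseteq A$ and $\phi(\bar x,\bar y)$. By stable embeddedness $tp_\phi(\bar a/P^\cC)$ has a definition $\vartheta(\bar y,\bar e)$ of the normalized form above, with $\bar e\subseteq P^\cC$. I then consider $\chi(\bar z):=(\forall \bar y\in P)\,[\phi(\bar a,\bar y)\leftrightarrow\vartheta(\bar y,\bar z)]$, a formula whose parameters all lie in $\bar a\subseteq A$. Since $\bar e$ witnesses $(\exists \bar z\in P)\chi(\bar z)$, completeness applied to $\chi$ (the parameter tuple being $\bar a$) produces a witness $\bar e'\subseteq P^\cC\cap A$; then $\vartheta(\bar y,\bar e')$ defines $tp_\phi(\bar a/P^\cC)$ over $A\cap P^\cC$, as required. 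For the elementarity clause I would check the Tarski--Vaught criterion for $A\cap P^\cC\subseteq P^\cC$: given a $\cC|_P$-formula $\theta(x,\bar b)$ with $\bar b\subseteq A\cap P^\cC$ and $\cC|_P\models(\exists x)\theta(x,\bar b)$, the statement $(\exists x\in P)\theta(x,\bar b)$ holds in $\cC$ with parameters $\bar b\subseteq A$, so completeness yields a witness $a\in P^\cC\cap A$, which is the required witness inside $A\cap P^\cC$ (taking $\theta$ trivial, or $\theta(x)\equiv x=c$ for a constant $c$, also shows $A\cap P^\cC$ is nonempty and contains the constants). Hence $A\cap P^\cC\prec P^\cC$.

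For the reverse direction, assume both clauses and verify Definition \ref{dfn:complete}. Suppose $\cC\models(\exists\bar x\in P)\psi(\bar x,\bar b)$ with $\bar b\subseteq A$. Applying the definability clause to the tuple $\bar b$ and the formula $\phi(\bar y,\bar x):=\psi(\bar x,\bar y)$, I obtain in normalized form a $\cC|_P$-formula $\vartheta(\bar x,\bar d)$ with $\bar d\subseteq A\cap P^\cC$ such that $\cC\models\psi(\bar c,\bar b)\leftrightarrow\vartheta(\bar c,\bar d)$ for every $\bar c\subseteq P^\cC$. A witness $\bar c_0\subseteq P^\cC$ for $\psi(\cdot,\bar b)$ then gives $\cC|_P\models(\exists\bar x)\vartheta(\bar x,\bar d)$, an existential whose parameters $\bar d$ lie in $A\cap P^\cC$; since $A\cap P^\cC\prec P^\cC$, it is witnessed by some $\bar a\subseteq A\cap P^\cC$. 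As $\bar a\subseteq P^\cC$, the defining equivalence gives $\cC\models\psi(\bar a,\bar b)$ with $\bar a\subseteq P^\cC\cap A$, which is exactly the completeness witness.

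The only step I expect to require real care is the normalization in the first paragraph: keeping the defining formulas inside the induced structure $\cC|_P$, so that the clause $A\cap P^\cC\prec P^\cC$ may legitimately be applied to them and so that the relativized and unrelativized readings of ``$\exists\,\bar x\in P$'' coincide. This is precisely the content that Hypothesis \ref{asm:1}(i) (with (ii)--(iii) ensuring absoluteness of the ambient $\tau$-formulas) is there to supply; once it is in place, each direction collapses to a single application of completeness or of Tarski--Vaught.
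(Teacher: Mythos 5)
Your proof is correct and follows what is essentially the intended argument. Note that the paper states this observation without proof (it is quoted as Observation 4.2 of \cite{ShUs322a}); your two directions --- using completeness to pull the parameters of the stable-embeddedness definition (suitably relativized to $P$) into $A\cap P^{\mathcal{C}}$, together with Tarski--Vaught for the elementarity clause, and conversely using definability over $A\cap P^{\mathcal{C}}$ plus $A\cap P^{\mathcal{C}}\prec P^{\mathcal{C}}$ to produce the witnesses required by Definition \ref{dfn:complete} --- constitute exactly the standard proof of that result, with the relativization/normalization issue you flag handled correctly.
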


In fact, in terms of definability, one can say a bit more:

\begin{fact} \label{10}For any complete $A$ there are $\langle \Psi_\psi :\psi(\bar
x,\bar y)\in L(T)\rangle $ (depending on $A$) such that for all
$\bar a\subseteq A$, $tp_\psi (\bar a/P\cap A)$ is definable by
$\Psi _\psi (\bar y,\bar c)$ for some $\bar c\subseteq A\cap P$.
\end{fact}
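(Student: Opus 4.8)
The plan is to prove Fact \ref{10} by strengthening Observation \ref{obs:complete_characterization}. That observation already tells us that for a complete set $A$, each $\phi$-type $tp_\phi(\bar a / P^\cC)$ is definable over $A \cap P^\cC$; moreover, since $A \cap P^\cC \prec P^\cC$, the $\phi$-type $tp_\phi(\bar a / P \cap A)$ is simply the restriction, and it too is definable over $A \cap P$. The content of Fact \ref{10} that goes beyond this is \emph{uniformity}: the defining formula $\Psi_\psi(\bar y, \bar c)$ depends only on $\psi$ (and on $A$), not on the particular tuple $\bar a$. In other words, I need a single schema $\langle \Psi_\psi \rangle_\psi$ that simultaneously defines the $\psi$-type of every tuple from $A$, with the parameters $\bar c \in A \cap P$ varying with $\bar a$ but the formula $\Psi_\psi$ fixed.

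First I would fix a formula $\psi(\bar x, \bar y)$ with $\bar x$ the variables ranging over $P$ and $\bar y$ the variables for the tuple $\bar a \subseteq A$. By Hypothesis \ref{asm:1}(i) (stable embeddedness), every type over $P^\cC$ is definable, so for each $\bar a$ there is a defining formula for $tp_\psi(\bar a / P^\cC)$ with parameters in $P^\cC$. The key tool for obtaining a \emph{uniform} defining schema is the definability of types in the theory $T^P$ together with stable embeddedness: I would invoke the standard fact that when all types over $P$ are definable, the defining formulas can be taken from a fixed finite family (indexed by $\psi$) — this is the usual "uniform definability of types" packaged in the notion of a defining schema. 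Concretely, one considers the map $\bar a \mapsto tp_\psi(\bar a / P^\cC)$ and uses that each such type has a defining formula of the form $\Psi_\psi(\bar y, \bar c_{\bar a})$ where $\Psi_\psi$ is drawn from a fixed finite set depending only on $\psi$; a compactness argument, or a direct appeal to the fact that the $\psi$-definitions live in a single definable family, then lets one fix one $\Psi_\psi$ that works for all $\bar a$ (possibly after a harmless finite disjunction over the finitely many members of the family).

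The second ingredient is to pull the parameters down into $A \cap P$. By Observation \ref{obs:complete_characterization}, completeness of $A$ gives both that $A \cap P^\cC \prec P^\cC$ and that the relevant definitions can be taken over $A \cap P^\cC$. So once $\Psi_\psi$ is fixed uniformly, for each $\bar a \subseteq A$ the defining parameter $\bar c$ can be chosen inside $A \cap P$: elementarity of $A \cap P$ in $P^\cC$ guarantees that a witness $\bar c$ to the (first-order, over $P$) statement "$\Psi_\psi(\bar y, \bar c)$ defines $tp_\psi(\bar a / P)$" that exists in $P^\cC$ already exists in $A \cap P$. This is exactly where completeness, as opposed to a weaker hypothesis, is used.

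I expect the main obstacle to be the \emph{uniformity} step — passing from "for each $\bar a$ there is a defining formula" to "there is a single $\Psi_\psi$ (depending only on $\psi$ and $A$) defining the $\psi$-type of every $\bar a$." The per-tuple definability is immediate from stable embeddedness, and the descent of parameters into $A \cap P$ is immediate from $A \cap P \prec P$; the real work is showing the defining formula need not vary with $\bar a$. I would handle this via the standard model-theoretic fact that stable embeddedness yields uniformly definable $\psi$-types (the defining formula ranges over a fixed finite schema), which is presumably already available from the cited background in \cite{PiSh130} or \cite{ShUs322a}; if a self-contained argument is wanted, a compactness/automorphism argument showing that the family of $\psi$-definitions is itself definable over $P$ does the job.
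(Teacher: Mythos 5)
The paper gives no proof of Fact \ref{10} at all: it is recalled as background from \cite{PiSh130} and \cite{ShUs322a}, so there is no internal argument to compare yours against, and I can only judge the proposal on its own terms. Your decomposition is the standard and correct one: (a) stable embeddedness (Hypothesis \ref{asm:1}(i)) yields, by compactness, finitely many candidate schemas per formula $\psi$, which are then merged into a single $\Psi_\psi(\bar y,\bar z)$ defining $tp_\psi(\bar a/P^{\cal C})$ for \emph{every} $\bar a$ with a suitable parameter from $P^{\cal C}$; (b) the parameter is then pulled down into $A\cap P$. One small caveat on (a): the merging step is not a "harmless finite disjunction" — a plain disjunction cannot switch off unwanted disjuncts; one needs the selector-parameter trick, which requires at least two elements (this is exactly the kind of caveat the paper records in Corollary \ref{14}(2)).

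The more substantive problem is that your descent step (b) is misstated. The statement ``$\Psi_\psi(\bar y,\bar c)$ defines $tp_\psi(\bar a/P)$'', i.e.\ $(\forall\bar y\in P)\,[\psi(\bar a,\bar y)\leftrightarrow\Psi_\psi(\bar y,\bar c)]$, carries the parameter $\bar a$, which does \emph{not} lie in $P$; it is therefore not a first-order statement about the structure $P^{\cal C}$, and elementarity $A\cap P\prec P^{\cal C}$ alone cannot produce a witness $\bar c$ in $A\cap P$, contrary to what you write. There are two correct ways to finish, both within reach of your setup. Either apply Definition \ref{6} (completeness of $A$) directly to the formula $(\exists\bar z\in P)(\forall\bar y\in P)\,[\psi(\bar a,\bar y)\leftrightarrow\Psi_\psi(\bar y,\bar z)]$, whose parameters $\bar a$ lie in $A$: completeness is precisely the device for finding witnesses in $P\cap A$ to existentials relativized to $P$ with parameters in $A$, and this is the cleanest route. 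Or first use Observation \ref{obs:complete_characterization} to obtain some (non-uniform) definition $d(\bar y)$ of $tp_\psi(\bar a/P^{\cal C})$ with parameters in $A\cap P$; then $(\exists\bar z)(\forall\bar y)\,[d(\bar y)\leftrightarrow\Psi_\psi(\bar y,\bar z)]$ \emph{is} a statement over $P^{\cal C}$ with parameters in $A\cap P$, true in $P^{\cal C}$, and elementarity now legitimately yields a witness $\bar c\in A\cap P$, for which $\Psi_\psi(\bar y,\bar c)$ defines the same type. You name completeness as the key hypothesis, so the right idea is nearby, but the mechanism you actually describe — bare elementarity applied to a formula mentioning $\bar a$ — would fail if executed literally; with either fix above, the rest of your argument stands.
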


\smallskip

It is useful to note that, since $T$ has QE, the property of completeness for a set $A$  depends only on its first order theory (as a substructure of $\cC$):

\begin{lem}(Lemma 4.5 in \cite{ShUs322a})\label{lem:complete_QE}\label{7.5}
\begin{itemize}
\item[(i)] If $A_1\equiv A_2$, then $A_1$ is complete iff $A_2$ is
complete.
\item[(ii)] $A$ is complete iff whenever the
sentence $$\theta=:(\forall \bar y)[S(\bar y)\leftrightarrow (\exists x\in P)
R(x,\bar y)]$$ for quantifier free 
$R,S$ is satisfied in ${\cal C}$, then
$A$ satisfies $\theta$.
%
%
\end{itemize}
\end{lem}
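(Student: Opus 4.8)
## Proof Proposal for Lemma \ref{lem:complete_QE}

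The plan is to prove both parts by reducing completeness to the satisfaction of a collection of first-order sentences, and then invoking elementary equivalence. The key observation is that the definition of completeness (Definition \ref{dfn:complete}) is a statement about the set $A$ as a substructure, and under the hypothesis that $T$ has quantifier elimination, the relevant formulas $\psi$ can be taken quantifier-free. This is precisely what allows the entire condition to be captured by sentences in the theory of $A$.

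First I would prove part (ii), since part (i) follows from it immediately. For the forward direction, suppose $A$ is complete, and suppose $\theta$ of the displayed form holds in $\cC$. I want to show $A \models \theta$. Fix $\bar{y}$-tuples from $A$. The nontrivial direction is to show that whenever $A \models S(\bar{b})$, then $A \models (\exists x \in P)R(x,\bar{b})$; but since $S, R$ are quantifier-free and $A$ is a substructure, satisfaction of $S(\bar{b})$ transfers up to $\cC$, where $\theta$ then gives a witness $x \in P^{\cC}$ with $R(x,\bar{b})$. Completeness of $A$ (applied to the quantifier-free formula $\psi = R$) then yields a witness $\bar{a} \subseteq P^{\cC} \cap A$ with $R(\bar{a},\bar{b})$, and since $R$ is quantifier-free this reflects back down into $A$, giving $A \models (\exists x \in P)R(x,\bar{b})$. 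The reverse direction in $\theta$ (that $A \models (\exists x \in P)R(x,\bar{b})$ implies $A \models S(\bar{b})$) again transfers through $\cC$ using quantifier-freeness of $R, S$. Conversely, to show that satisfaction of all such $\theta$ implies completeness: given $\psi(\bar{x},\bar{y})$ and $\bar{b} \subseteq A$ with $\cC \models (\exists \bar{x} \in P)\psi(\bar{x},\bar{b})$, I use QE to replace $\psi$ by a quantifier-free formula, define $S(\bar{y})$ to abbreviate $(\exists \bar{x} \in P)\psi(\bar{x},\bar{y})$ (which, after QE and stable embeddedness, is equivalent to a quantifier-free formula via Hypothesis \ref{asm:1}), so that the corresponding $\theta$ holds in $\cC$, hence in $A$, and then read off the required witness inside $A$.

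Part (i) is then a short consequence: the condition ``$A \models \theta$'' for each sentence $\theta$ of the specified form depends only on $Th(\cC|_A)$, so if $A_1 \equiv A_2$ then $A_1$ satisfies all such $\theta$ if and only if $A_2$ does, and part (ii) identifies this with completeness for each of them. Hence $A_1$ is complete iff $A_2$ is.

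The main obstacle I anticipate is the reverse implication of part (ii): massaging a general formula $\psi(\bar{x},\bar{y})$ and the quantifier $(\exists \bar{x} \in P)$ into the exact syntactic shape $(\forall \bar{y})[S(\bar{y}) \leftrightarrow (\exists x \in P)R(x,\bar{y})]$ with \emph{quantifier-free} $R, S$ and a \emph{single} bound variable $x \in P$. Handling the passage from a tuple $\bar{x} \in P$ to a single variable requires either coding tuples or iterating, and obtaining quantifier-free $S$ capturing the projection $(\exists \bar{x} \in P)R$ relies essentially on stable embeddedness together with QE (Hypothesis \ref{asm:1}), which guarantees that such projections onto $P$ are themselves definable by formulas that, after eliminating quantifiers, are quantifier-free. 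Making this reduction precise — and checking that no hidden parameters outside $A \cap P$ sneak in — is where the careful bookkeeping lies.
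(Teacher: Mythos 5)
Your proof is correct and follows what is essentially the only natural route: the paper itself states this lemma without proof (it is quoted as Lemma 4.5 of \cite{ShUs322a}), and the intended argument is the one you give --- quantifier-free formulas transfer between the substructure $A$ and $\cC$, and QE converts the general completeness condition into the restricted syntactic form of $\theta$. Two refinements to your sketch: first, producing a quantifier-free $S(\bar y)$ equivalent to $(\exists \bar x\in P)\psi(\bar x,\bar y)$ needs only QE (Hypothesis \ref{asm:1}(3)), not stable embeddedness, since the relativized existential is itself an $L(T)$-formula and QE applies to it directly. Second, the tuple-to-single-variable issue you flag as the main obstacle is resolved by a routine induction on the length of $\bar x$ rather than by coding: apply the one-variable case to $\chi(x_1,\bar y):=(\exists x_2\cdots x_n\in P)\psi(\bar x,\bar y)$ to obtain a witness $a_1\in P\cap A$ with $\cC\models\chi(a_1,\bar b)$, then absorb $a_1$ into the parameter tuple (which remains inside $A$, as Definition \ref{dfn:complete} allows arbitrary parameters from $A$) and recurse on the remaining $n-1$ existential variables.
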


Let us also recall a few remarks on the relation between $P$ and the algebraic closure in complete sets.

\smallskip

Let us now recall the relevant notion of type for this context. Note that it is only defined over a complete set.

\begin{de}\label{6.5}\label{dfn:startypes}

Let $A$ be a complete set. 

\begin{itemize}

\item[(i)] Let
$$S_*(A)=\{tp(\bar c/A):P\cap (A\cup \bar c)=P\cap A {\rm\ and}\ A\cup \bar
c\ {\rm is\ complete}\}$$
\item[(ii)] $A$ is \emph{stable over $P$}, or simply \emph{stable}, if for all $A^\prime$
with $A^\prime\equiv A$, we have $|S_*(A^\prime)|\leq |A^\prime|^{|T|}$.
\end{itemize}
\end{de}

\begin{remark}
\begin{enumerate}
\item Even though ``stability over $P$'' is a more appropriate and accurate name for our notion of stability of a set (and the term ``stable set'' exists in literature, and has a different meaning), since we  have only one notion of stability in this article (stability over $P$), we will sometimes omit ``over P'' and simply write ``stable''. 
\item Sometimes we refer to types in  $S_*(A)$ as \emph{complete types over $A$ which are weakly orthogonal to $P$}. 
\end{enumerate}

\end{remark}

\begin{remark} (Remark 4.10 in \cite{ShUs322a})
	If $A$ is complete and $\c$ a tuple such that $tp(\c/A) \in S_*(A)$, then $\acl(A \cup \c)  \cap P^\cC = P^A$.
\end{remark}

The last remark also follows from the following more general criterion for a complete type being a *-type (see Observation 4.11 in \cite{ShUs322a}):

\begin{obs}\label{8.5}
 Let $A$ be a complete set and $\c$ a tuple. Then $tp(\c/A) \in S_*(A)$ if and only if for every formula $\psi(\x,\a,\y)$ over $A$ we have 
 \[ \models \exists \x \in P\, \psi(\x,\a,\c) \Longrightarrow \exists \bar b \in P\cap  A \, \text{such that} \, \models \psi(\bar b,\a,\c) \]
 \end{obs}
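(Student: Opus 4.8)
The plan is to prove both implications by directly unwinding the definition of $S_*(A)$, whose membership condition splits into two clauses: that $\c$ introduces no new elements of $P$, i.e. $P \cap (A \cup \c) = P \cap A$, and that $A \cup \c$ is complete. The displayed condition is essentially the completeness requirement for $A \cup \c$, except that it demands witnesses in $P \cap A$ rather than merely in $P \cap (A \cup \c)$; the whole point of the equivalence is that these two formulations coincide precisely because of the first clause.

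For the forward direction, I would assume $tp(\c/A) \in S_*(A)$. Given a formula $\psi(\x,\a,\y)$ over $A$ with $\models \exists \x \in P\, \psi(\x,\a,\c)$, I regard $\a \cup \c$ as a parameter tuple contained in $A \cup \c$ and invoke completeness of $A \cup \c$ (Definition \ref{dfn:complete}) to extract a witness $\bar b \in P \cap (A \cup \c)$ with $\models \psi(\bar b, \a, \c)$. Since $P \cap (A \cup \c) = P \cap A$ by the first clause, this $\bar b$ already lies in $P \cap A$, which is exactly what the condition requires.

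For the converse, I would assume the displayed witness condition and verify the two clauses of membership. To obtain $P \cap (A \cup \c) = P \cap A$, suppose a coordinate $c_i$ of $\c$ lies in $P$; applying the hypothesis to the formula $x = y_i$, whose existential $P$-closure is witnessed by $c_i$ itself, yields some $b \in P \cap A$ with $b = c_i$, so $c_i \in P \cap A$. Hence $P \cap \c \subseteq P \cap A$ and the first clause holds. To see that $A \cup \c$ is complete, I take any $\psi(\x, \bar d)$ with $\bar d \subseteq A \cup \c$ and $\models \exists \x \in P\, \psi(\x, \bar d)$, split $\bar d$ into its $A$-part $\a$ and its $\c$-part (absorbing the latter into $\c$ by adding dummy variables), and apply the hypothesis to produce a witness $\bar b \in P \cap A \subseteq P \cap (A \cup \c)$; this is precisely the completeness requirement for $A \cup \c$.

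The argument is a routine unwinding of definitions, so I do not expect a serious obstacle; the only point requiring care is the bookkeeping in the converse, namely recognizing that the equality formula $x = y_i$ forces the $P$-coordinates of $\c$ into $A$ (giving the first clause), while the general formula handles completeness, and that a witness found in $P \cap A$ automatically counts as a witness in $P \cap (A \cup \c)$.
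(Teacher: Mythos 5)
Your proof is correct: both directions are exactly the routine unwinding of Definition \ref{dfn:startypes} and Definition \ref{dfn:complete}, with the two key observations (that the clause $P\cap(A\cup\c)=P\cap A$ converts witnesses in $P\cap(A\cup\c)$ into witnesses in $P\cap A$, and that the formula $x=y_i$ recovers that clause from the witness condition) handled properly. The paper itself gives no proof here --- it cites Observation 4.11 of \cite{ShUs322a} --- and your argument is precisely the standard one that reference supplies, so there is nothing further to compare.
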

 
\medskip


The following Lemma (Lemma 4.12 in \cite{ShUs322a}) was at heart of many of our proofs in \cite{ShUs322a}, since it allows us to extend any ``small'' type to a type orthogonal to $P$, provided that $P$ is saturated.

\begin{lem}(\underline{The Small Type Extension Lemma})\label{extension}\label{9}\label{le:typeextension}
If $A \prec \cal C$, $A\cap P$ is $|M|$-compact, and $p(\bar x)$ is a partial type over $A$
of cardinality $<|A|$,  then there is some ${p^*}(\bar x)\in S_*(A)$
extending $p$.
\end{lem}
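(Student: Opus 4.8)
The plan is to aim directly at the characterization of $S_*(A)$ provided by Observation \ref{8.5}, and to build the desired extension $p^*$ by a transfinite induction that decides every formula over $A$ while, at each relevant step, supplying a witness from $P\cap A$ so as to force weak orthogonality. In contrapositive form, the target is a type $p^* = tp(\bar c/A)$ extending $p$ such that for every formula $\psi(\bar z,\bar a,\bar x)$ over $A$ (with $\bar z$ ranging over $P$), whenever $\models \exists \bar z\in P\,\psi(\bar z,\bar a,\bar c)$ there is already some $\bar b\in P\cap A$ with $\models\psi(\bar b,\bar a,\bar c)$.

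The heart of the argument, and the step I expect to be the main obstacle, is the following witness-extraction claim: if $q(\bar x)\supseteq p$ is a consistent partial type over $A$ with $|q|<|A|$ and $q\cup\set{\exists \bar z\in P\,\psi(\bar z,\bar a,\bar x)}$ is consistent, then there is $\bar b\in P\cap A$ with $q\cup\set{\psi(\bar b,\bar a,\bar x)}$ consistent. To prove it, for each finite $q'\subseteq q$ set $\theta_{q'}(\bar z)=\exists \bar x\,(\bigwedge q'(\bar x)\wedge\psi(\bar z,\bar a,\bar x))$, a formula over $A$ in the $P$-sorted variable $\bar z$, and let $\Theta(\bar z)=\set{\theta_{q'}(\bar z)\colon q'\subseteq q\text{ finite}}$. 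A realization of $q\cup\set{\exists\bar z\in P\,\psi}$ together with its witnessing element of $P$ shows that $\Theta$ is realized by some tuple in $P^\cC$. Since $A\prec\cC$, $A$ is complete (Observation \ref{obs:complete}), so by Observation \ref{obs:complete_characterization} and Fact \ref{10}, stable embeddedness (Hypothesis \ref{asm:1}) yields for each $q'$ a definition $\Psi_{q'}(\bar z,\bar e_{q'})$ with $\bar e_{q'}\subseteq A\cap P$ that agrees with $\theta_{q'}$ on all of $P^\cC$. Replacing each $\theta_{q'}$ by $\Psi_{q'}$ turns $\Theta$ into a partial type $\Theta'(\bar z)$ over $A\cap P$ using $<|A|$ parameters and formulas, equivalent to $\Theta$ on $P^\cC$ and hence still realized there, so consistent. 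As $A\cap P$ is $|A|$-compact, $\Theta'$ is realized by some $\bar b\in A\cap P$; being in $P$, $\bar b$ satisfies every $\theta_{q'}$, which says exactly that $q\cup\set{\psi(\bar b,\bar a,\bar x)}$ is finitely satisfiable, hence consistent. This is the place where all three inputs—stable embeddedness, completeness of $A$, and saturation of the $P$-part—are used in concert.

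With the claim in hand the remainder is routine bookkeeping. Enumerate all formulas over $A$ in the free variable $\bar x$ as $\langle \chi_\alpha\colon\alpha<|A|\rangle$ and build an increasing chain $p_0=p\subseteq p_{\alpha+1}\subseteq\cdots$ of consistent partial types over $A$, taking unions at limits. At stage $\alpha$, if $p_\alpha\cup\set{\chi_\alpha}$ is inconsistent add $\neg\chi_\alpha$; otherwise add $\chi_\alpha$, and in the special case $\chi_\alpha=\exists \bar z\in P\,\psi(\bar z,\bar a,\bar x)$ additionally invoke the claim to pick $\bar b\in A\cap P$ and adjoin the witness $\psi(\bar b,\bar a,\bar x)$, which is consistent with $p_\alpha$ and entails $\chi_\alpha$. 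Since $|p|<|A|$ and $|\alpha|<|A|$ one has $|p_\alpha|<|A|$ at every stage, so the claim always applies, and consistency is preserved through successors and limits by compactness. The union $p^*$ is a complete type over $A$ extending $p$. Finally, if $\bar c\models p^*$ and $\models\exists\bar z\in P\,\psi(\bar z,\bar a,\bar c)$, then $\chi=\exists\bar z\in P\,\psi\in p^*$, so at its stage we entered the witness branch and $\psi(\bar b,\bar a,\bar x)\in p^*$ for some $\bar b\in A\cap P$, whence $\models\psi(\bar b,\bar a,\bar c)$. By Observation \ref{8.5} this shows $tp(\bar c/A)=p^*\in S_*(A)$, as required.
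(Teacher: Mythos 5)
The paper itself offers no proof of this lemma: it is recalled verbatim from \cite{ShUs322a} (Lemma 4.12 there), so there is no in-paper argument to compare yours against; the only proof in this paper of a "type extension" statement is the trivial one for Observation \ref{extension1}, where the witnesses come for free from an ambient $\lambda$-saturated model $M$ with $P^M=P^A$. Your proof is correct, and it supplies exactly what that easy argument cannot: it manufactures the witnesses inside $A\cap P$ from scratch. The key step — for consistent $q\supseteq p$ with $|q|<|A|$ and $q\cup\{\exists \bar z\in P\,\psi(\bar z,\bar a,\bar x)\}$ consistent, pull the family $\theta_{q'}(\bar z)=\exists\bar x(\bigwedge q'\wedge\psi(\bar z,\bar a,\bar x))$ down to a type over $A\cap P$ via the definability granted by completeness of $A$ (Observations \ref{obs:complete}, \ref{obs:complete_characterization}, i.e.\ stable embeddedness, Hypothesis \ref{asm:1}), check finite satisfiability in $A\cap P$ using $A\cap P\prec P^{\mathcal{C}}$, and realize it by $|A|$-compactness — is sound, and the transfinite bookkeeping that decides each formula while adjoining a $P$-witness whenever a formula $\exists\bar z\in P\,\psi$ is adopted does verify precisely the criterion of Observation \ref{8.5}. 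This is the natural (and, as far as one can tell, essentially the intended) proof of the cited lemma. Two small points you should make explicit: your enumeration of all formulas over $A$ in order type $|A|$ tacitly assumes $|A|\geq|T|$ (otherwise the enumeration has length $|A|+|T|>|A|$ and the cardinality bound $|p_\alpha|<|A|$ needed to invoke the claim breaks down) — this is harmless in the paper's intended setting, where the relevant saturation parameter exceeds $|T|$; and in the claim the family $\Theta'$ has size $|q|+\aleph_0$, which is $<|A|$ only when $|A|>\aleph_0$, while for $|A|=\aleph_0$ your own construction keeps every $p_\alpha$ finite, so $\Theta'$ is finite and mere elementarity of $A\cap P\prec P^{\mathcal{C}}$ suffices; it is worth saying this rather than leaving the case split implicit.
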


\medskip

This Lemma yields another characterization of completeness, justifying, in some sense, the original motivation behind this definition (see the discussion in the very beginning of this section). See Proposition 4.14 in \cite{ShUs322a}. 

\begin{pr}\label{approx}\label{8}\label{prp:complete_exnetdabletomodel}
Suppose that $A\cap P$ is $|A|$-compact. Then $A$ is complete if and only if
there exists an $M\prec {\cal C}$ with $P^M\subseteq A\subseteq M$.
If $|A|=|A|^{<|A|}>|T|$, we can add ``$M$ saturated''.
\end{pr}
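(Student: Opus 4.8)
The forward implication is immediate and needs no compactness: if $M\prec\cC$ with $P^M\subseteq A\subseteq M$, then $A$ is complete by Observation~\ref{obs:complete}. So the content is the converse, where I must turn a complete set $A$ with $A\cap P$ being $|A|$-saturated into an elementary submodel $M$ with $A\subseteq M$ and $P^M=A\cap P$. The whole point is to close $A$ under Tarski--Vaught witnesses \emph{without} enlarging the $P$-part, and the mechanism for this is to only ever adjoin elements realizing $*$-types: by the very definition of $S_*(A_\alpha)$, realizing a type in $S_*(A_\alpha)$ neither destroys completeness nor adds new points to $P$.

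Concretely, I would build an increasing chain of complete sets $A=A_0\subseteq A_1\subseteq\dots$ with $A_n\cap P=A\cap P$ and $|A_n|=|A|$ (working in the relevant case $|A|\ge|T|$). At stage $n$, enumerate the Tarski--Vaught requirements over $A_n$, i.e.\ the pairs $(\varphi(x,\bar y),\bar b)$ with $\bar b\subseteq A_n$ and $\models\exists x\,\varphi(x,\bar b)$; there are at most $|A|$ of them. For each I would apply the Small Type Extension Lemma~\ref{le:typeextension} to the complete set $A_n$, whose $P$-part $A\cap P$ is $|A|=|A_n|$-saturated, and to the single formula $\varphi(x,\bar b)$, obtaining a type $p^*\in S_*(A_n)$ containing $\varphi$; realizing $p^*$ by one new element preserves completeness and the $P$-part. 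Adjoining witnesses for all requirements yields $A_{n+1}$. Setting $M=\bigcup_n A_n$, completeness and the equality $M\cap P=A\cap P$ pass to the union, since any finite tuple and any $P$-witness already live in some $A_n$; and every Tarski--Vaught requirement over $M$ is discharged at the next stage. Hence $M\prec\cC$ with $P^M=A\cap P\subseteq A\subseteq M$, as desired. (Here I use the $*$-type extension property for the complete sets $A_n$, not merely for models; its proof uses only completeness of the base and $|A|$-saturation of its $P$-part, cf.\ Observation~\ref{8.5}.)

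For the saturated refinement, assume $|A|=|A|^{<|A|}>|T|$; I would run the same construction, now of length $|A|$ with unions at limits, additionally discharging, via a bookkeeping of the $|A|^{<|A|}=|A|$ many types over subsets of size $<|A|$, the requirement to realize each such type $p$ over some $B\subseteq A_\alpha$. The one genuine subtlety is that an arbitrary small type carries $P$-content and so need not extend to a $*$-type directly. This is where the $|A|$-saturation of $A\cap P$ is used a second time: by stable embeddedness (Hypothesis~\ref{asm:1}), the $P$-part of $p$ reduces to a type over $B\cap P$ of size $<|A|$, which is realized \emph{inside} $A\cap P$; fixing such realizations, the remaining coordinates carry a small type over $A_\alpha$, which the extension lemma~\ref{le:typeextension} pushes into $S_*(A_\alpha)$ and I realize without touching $P$. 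Combining the two yields a realization of $p$ in the next stage with $P$-part still $A\cap P$, so the resulting union is $|A|$-saturated of cardinality $|A|$, i.e.\ saturated.

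The main obstacle, and the only place real work happens, is exactly the tension between the two goals — satisfying Tarski--Vaught (and, in the refinement, realizing all small types) while pinning $P^M$ to $A\cap P$. Everything hinges on always being able to find witnesses realizing $*$-types, i.e.\ on the applicability of the Small Type Extension Lemma at each stage; the completeness of the intermediate sets and the $|A|$-saturation of their common $P$-part are precisely the hypotheses that make this possible, and the saturation of $A\cap P$ is what additionally lets the $P$-content of small types be absorbed internally in the refined construction.
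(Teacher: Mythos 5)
Your proof is correct and follows exactly the route the paper points to for this proposition (the paper gives no proof of its own, citing Proposition 4.14 of \cite{ShUs322a}, but says explicitly that the Small Type Extension Lemma ``yields'' this characterization): a Tarski--Vaught chain in which every adjoined element realizes a type in $S_*$ of the current complete set, so completeness and the $P$-part are preserved at every stage and at unions, with bookkeeping of small types added for the saturated refinement; your reading of Lemma \ref{le:typeextension} as applying to complete sets (not just models -- the paper's statement is garbled there) is the right one, since otherwise the lemma could not yield this proposition at all. One remark: in the saturated case your manual splitting of a type into its $P$-content and the rest is unnecessary and slightly imprecise (the reduced type needs parameters in $A\cap P$ rather than $B\cap P$, via Observation \ref{obs:complete_characterization}, and must include the existential projections of $p$ so the two halves remain compatible); it is cleaner to apply Lemma \ref{le:typeextension} directly to the whole small type $p$ over $A_\alpha$ (note $|A|=|A|^{<|A|}$ forces $|A|$ regular, so $|p|<|A|$), since by the definition of $S_*(A_\alpha)$ any $P$-coordinates of a realization of the resulting $*$-type automatically land in $A_\alpha\cap P=A\cap P$.
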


\medskip

It is natural to ask, however, what happens if $P$ is not saturated, but, for instance, is only $\lam$-saturated, and $|p|<\lam$? We don't know the full answer to this question. However, the following ``poor man's version'' holds if we already know that $A$ can be extended to a $\lam$-saturated model $M\models T$ with $P^M = P^A$. We will make use of this version later. 

\begin{obs}(\underline{The Small Type Extension Lemma, Version II})\label{extension1}\label{obs:typeextension}
	Let $M \prec \cal C$ be $\lam$-saturated, and $p(\bar x)$ is a partial type over $M$ of cardinality $<\lam$ (where $\x$ is potentially an infinite tuple of cardinality $<\lam$), and let $A$ be such that $P^M \subseteq A \subseteq M$. Then there is some ${p^*}(\bar x)\in S_*(A)$
extending $p$.

\end{obs}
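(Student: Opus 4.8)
The plan is to realize $p$ by a tuple lying entirely inside $M$ and to observe that this alone forces the resulting type over $A$ to be a $*$-type; so the statement is essentially a consequence of $\lam$-saturation of $M$ together with Observation \ref{obs:complete}, with no further appeal to saturation of the $P$-part. First I would record two preliminary facts. Since $M\prec\mathcal{C}$ and $P^M\subseteq A\subseteq M$, Observation \ref{obs:complete} gives that $A$ is complete, so $S_*(A)$ is defined; and $P\cap A=A\cap P^{\mathcal{C}}\subseteq M\cap P^{\mathcal{C}}=P^M\subseteq A$, so that $P\cap A=P^M$.

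Next I would realize $p$ inside $M$. Let $A_0\subseteq M$ be the set of parameters occurring in $p$; since each formula of $p$ uses finitely many parameters and $|p|<\lam$, we have $|A_0|<\lam$. Put $\mu=|\bar x|<\lam$ and fix any realization $\bar d\models p$ in $\mathcal{C}$. I would build $\bar c=(c_i)_{i<\mu}\subseteq M$ by recursion on $i<\mu$, maintaining the invariant $tp((c_j)_{j<i}/A_0)=tp((d_j)_{j<i}/A_0)$. At a successor stage, transport $\bar d$ by an automorphism of $\mathcal{C}$ fixing $A_0$ and sending $(d_j)_{j<i}$ to the already chosen $(c_j)_{j<i}$ (possible by the invariant and homogeneity of the monster), and then use $\lam$-saturation of $M$ to realize the $1$-type of the transported $d_i$ over $A_0\cup\set{c_j:j<i}$ --- a set of size $\max(|A_0|,\mu)<\lam$ --- by some $c_i\in M$. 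Limit stages require nothing, since any single formula mentions only finitely many of the $x_i$. At the end $tp(\bar c/A_0)=tp(\bar d/A_0)$, and as $p$ is over $A_0$ this yields $\bar c\models p$ with $\bar c\subseteq M$.

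Finally I would set $p^*=tp(\bar c/A)$ and check $p^*\in S_*(A)$ directly from Definition \ref{dfn:startypes}. Because $\bar c\subseteq M$ we have $P^M\subseteq A\cup\bar c\subseteq M$, so $A\cup\bar c$ is complete by Observation \ref{obs:complete}; and $P\cap(A\cup\bar c)=(A\cup\bar c)\cap P^{\mathcal{C}}\subseteq M\cap P^{\mathcal{C}}=P^M=P\cap A$, the reverse inclusion being trivial, so $P\cap(A\cup\bar c)=P\cap A$. Hence $p^*\in S_*(A)$, and $p^*$ is by construction the type over $A$ of a realization $\bar c$ of $p$, which is the intended meaning of ``extending $p$''.

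The only step with any real content is the recursive realization of $p$ in $M$ when $\bar x$ is infinite: one must keep the successive parameter sets of size $<\lam$, which holds since $\max(|A_0|,\mu)<\lam$ (so that no regularity assumption on $\lam$ is needed), and note that the type constraints pass through limit stages for free. The conceptual point --- that realizing $p$ inside the $\lam$-saturated $M$ automatically produces a $*$-type over $A$ --- is immediate once $A\cup\bar c$ is seen to be complete with $P\cap(A\cup\bar c)\subseteq P^M\subseteq A$, and in particular does not even require the characterization of $*$-types in Observation \ref{8.5}.
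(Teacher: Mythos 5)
Your proof is correct and takes essentially the same route as the paper's: realize $p$ by a tuple $\bar c$ inside $M$ using $\lambda$-saturation, then observe that $A\cup\bar c$ lies between $P^M$ and $M$, so it is complete (Observation \ref{obs:complete}) with $P\cap(A\cup\bar c)=P\cap A$, whence $p^*=tp(\bar c/A)\in S_*(A)$. The only difference is that you spell out the standard recursion showing that a $\lambda$-saturated model realizes types in $<\lambda$ variables over parameter sets of size $<\lambda$, a step the paper compresses into ``By saturation''.
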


\begin{proof}
 First of all, note that since $A \subseteq M$ with $P^M = P^A$, clearly $A$ is complete (see Observation \ref{obs:complete}). By saturation, there is $\c\in M$ realizing $p$. As before, $B = A\cup\set{\c}$ is complete, and, of course, $P^B = P^A (= P^M)$. So $p^* = \tp(\c/A)$ is as required. 
\end{proof}

\smallskip

Note that the cardinality of $A$ does not matter; it could be that $|A| = |M| \ge \lam$.

\subsection{Stability and rank}

Next, let us recall a notion of rank that ``captures'' stability over $P$ (\cite{PiSh130}, \cite{ShUs322a}). 

\smallskip

\begin{de}\label{R}
For a complete set $A$, a (partial) $n$-type $p(\bar x)$ (with parameters in ${\cal C}$), 
  sets $\Delta _1,\Delta _2$ of formulas $\psi (\bar
x,\bar y)$, and a cardinal $\lambda$, we define when $R^n_A(p,\Delta
_1,\Delta _2,\lambda)\geq \alpha$. We usually omit $n$.

\begin{itemize}
\item[(i)] $R_A(p,\Delta _1,\Delta _2,\lambda)\geq 0$ if $p(\bar x)$ is consistent. 

\item[(ii)] For $\alpha$ a limit ordinal: $R_A(p,\Delta _1,\Delta
_2,\lambda)\geq \alpha$ if $R_A(p,\Delta _1,\Delta
_2,\lambda)\geq \beta$ for every $\beta <\alpha$.

\item[(iii)] For $\alpha =\beta +1$ and $\beta$ even:
For $\mu<\lambda$ and finite $q(\bar x)\subseteq p(\bar x)$ we can
find $r_i(\bar x)$ for $i\leq\mu$ such that;

\begin{itemize}
\item[{1.}] Each $r_i$ is a $\Delta _1$-type over $A$,

\item[{2.}] For $i\not= j, r_i$ and $r_j$ are explicitly contradictory
(i.e. for some $\psi$ and $\bar c$, $\psi (\bar x,\bar c)\in r_i,
\neg\psi(\bar x,\bar c)\in r_j$).

\item[{3.}] $R_A(q(\bar x)\cup r_i(\bar x), \Delta _1,\Delta
_2,\lambda)\geq \beta$ \red{for all $i$}.
\end{itemize}

\item[(iv)] For $\alpha =\beta +1: \beta$ odd: For 
$\mu<\lambda$ and finite $q(\bar x)\subseteq p(\bar x)$ and $\psi
_i\in \Delta _2, \bar d_i\in A$ ($i\leq \mu$), there are $\bar b_i\in
A\cap P$ such that $R(r_i,\Delta _1,\Delta
_2,\lambda)\geq \beta$ where $r_i=q(\bar x)\cup \{(\forall\bar
z\subseteq P) \left[\psi _i(\bar x,\bar d_i,\bar z)\equiv\Psi_{\psi
_i}(\bar z,\bar b_i)\right] \colon i<\mu\}$ where $\Psi_{\psi _i}$ is as in Fact \ref{10}.

\end{itemize}

$R^n_A(p,\Delta _1,\Delta _2,\lambda)= \alpha$ if $R^n_A(p,\Delta
_1,\Delta _2,\lambda)\geq \alpha$ but not $R^n_A(p,\Delta
_1,\Delta _2,\lambda)\geq \alpha +1$. $R^n_A(p,\Delta
_1,\Delta _2,\lambda)=\infty$ iff $R^n_A(p,\Delta
_1,\Delta _2,\lambda)\geq \alpha$ for all $\alpha$.
\end{de}

The main case for applications will be $\lambda =2$. Note that the
larger $R^n_A(p,\Delta _1,\Delta _2,\lambda)$, the more evidence there
is for the existence of many types $q(\bar x)\in S_*(A)$ consistent with
$p(\bar x)$.

\medskip 

See section 5 of \cite{ShUs322a} for a detailed discussion of some basic properties of the rank. But let us recall here some of the more important results:

\medskip

\begin{fact}\label{rank}(Fact 5.3 in \cite{ShUs322a})
%
For every $p$ there is a finite $q\subseteq p$, such that 
$R_A(p,\Delta _1,\Delta _2, 2)=R_A(q,\Delta _1,\Delta _2, 2)$.
%
%
\end{fact}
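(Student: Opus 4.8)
The plan is to isolate a \emph{finite character} property of the rank and then reduce the statement to the well-ordering of the ordinals. First I would record two monotonicity facts, each by a routine induction on $\alpha$ following Definition \ref{R}: (a) if $R_A(p,\Delta_1,\Delta_2,2)\geq\alpha$ and $\beta\leq\alpha$, then $R_A(p,\Delta_1,\Delta_2,2)\geq\beta$; and (b) if $q\subseteq p$, then $R_A(p,\Delta_1,\Delta_2,2)\leq R_A(q,\Delta_1,\Delta_2,2)$, i.e.\ a larger type has a smaller-or-equal rank. Fact (b) already gives $R_A(p)\leq R_A(q)$ for every finite $q\subseteq p$, so it remains only to exhibit a finite $q$ whose rank does not exceed that of $p$.

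The heart of the argument is the claim, proved by induction on $\alpha$, that $R_A(p,\Delta_1,\Delta_2,2)\geq\alpha$ \emph{if and only if} $R_A(q,\Delta_1,\Delta_2,2)\geq\alpha$ for every finite $q\subseteq p$. The forward direction is exactly (b). For the converse, the base case $\alpha=0$ is precisely \emph{compactness}: if every finite subtype of $p$ is consistent, then $p$ is consistent. The limit case is immediate from the induction hypothesis together with (a). The crucial observation is that the successor clauses (iii) and (iv) of Definition \ref{R} quantify only over \emph{finite} subtypes $q\subseteq p$; hence, to verify $R_A(p)\geq\beta+1$ for a given finite $q\subseteq p$ (and the admissible $\mu<2$, and, in the odd case, the data $\psi_i,\bar d_i$), one simply applies the hypothesis $R_A(q)\geq\beta+1$ to the subtype $q$ of itself and reads off the required $\Delta_1$-types $r_i$ (even case) or parameters $\bar b_i\in A\cap P$ (odd case). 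Thus the successor step needs no appeal to the induction hypothesis at all: finite character is already built into the definition there, and the only genuine analytic input is compactness at the base. I expect the only delicate point to be the bookkeeping in this successor step, namely making sure that the finite subtype one is handed for $p$ is exactly the one fed into the definition of $R_A(q)\geq\beta+1$; but since clauses (iii) and (iv) are phrased purely in terms of finite subtypes, no new idea is needed.

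With the claim in hand, the Fact follows from well-ordering. Set $\gamma=\min\{\,R_A(q,\Delta_1,\Delta_2,2): q\subseteq p \text{ finite}\,\}$, with the convention that $\infty$ exceeds every ordinal; the minimum is attained by some finite $q_0\subseteq p$. By minimality every finite $q\subseteq p$ satisfies $R_A(q)\geq\gamma$, so if $\gamma$ is an ordinal the claim yields $R_A(p)\geq\gamma$, while (b) gives $R_A(p)\leq R_A(q_0)=\gamma$; if $\gamma=\infty$, then applying the claim at every $\alpha$ gives $R_A(p)=\infty$. In either case $R_A(p,\Delta_1,\Delta_2,2)=R_A(q_0,\Delta_1,\Delta_2,2)$, as required. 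Note that nothing in this argument uses $\lambda=2$, so the same proof establishes the statement for every $\lambda$.
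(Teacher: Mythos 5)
Your proof is correct. Note that the paper itself gives no argument for this statement---it is imported as Fact 5.3 from \cite{ShUs322a}---so there is nothing to compare against line by line; but your argument is the standard (and essentially the only natural) one for ranks of this kind: the successor clauses (iii) and (iv) of Definition \ref{R} quantify only over finite subtypes of $p$, so both monotonicity under extension and the finite-character claim are built into the definition, compactness handles the base case $\alpha=0$, and the well-ordering of the ordinals (with the convention that $\infty$ dominates) then produces a finite $q_0\subseteq p$ of minimal rank, which must have the same rank as $p$. Your closing observation that the argument is insensitive to the value of $\lambda$ is also correct, since no step uses $\lambda=2$.
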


\begin{fact}\label{rankeven}(see Fact 5.3 in \cite{ShUs322a}). 
	Let $A$ be complete, $p\in S_*(A)$, $q^* \subseteq p$, and assume
	 $$R^n_A(q^*,\Delta _1,\Delta _2,\lambda)=R^n_A(p,\Delta _1,\Delta _2,\lambda)=k < \infty$$ Then $k$ is even.
\end{fact}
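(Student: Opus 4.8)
The plan is to argue by contradiction, exploiting the alternating structure of the rank. The even-to-odd successor clause (iii) is a genuine splitting requirement that can fail, whereas the odd-to-even successor clause (iv) only asks for a $P$-definability condition, and this is automatic for a type lying in $S_*(A)$ (such a type is, by definition, weakly orthogonal to $P$). So I would show that if $k$ were \emph{odd}, clause (iv) would allow me to push the rank of $q^*$ up to $k+1$, contradicting $R_A(q^*,\Delta_1,\Delta_2,\lambda)=k$. In other words, the rank of a $*$-type can never get ``stuck'' just before an odd step, so it must stop at an even value.

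Concretely, I would assume $k$ is odd, fix a realization $\bar c\models p$, and verify $R_A(q^*,\Delta_1,\Delta_2,\lambda)\ge k+1$ via clause (iv) with $\beta=k$. This means: given $\mu<\lambda$, a finite $q\subseteq q^*$, formulas $\psi_i\in\Delta_2$ and parameters $\bar d_i\in A$ (for $i\le\mu$), I must produce $\bar b_i\in A\cap P$ so that the type $r=q\cup\{(\forall\bar z\subseteq P)[\psi_i(\bar x,\bar d_i,\bar z)\equiv\Psi_{\psi_i}(\bar z,\bar b_i)]\colon i<\mu\}$ satisfies $R_A(r,\Delta_1,\Delta_2,\lambda)\ge k$.

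The crucial point I would establish is that each of these definability conditions already lies in $p$. Indeed, since $tp(\bar c/A)=p\in S_*(A)$, the set $A\cup\bar c$ is complete and $P\cap(A\cup\bar c)=P\cap A$. Hence, for each $i$, the $\psi_i$-type of the tuple $\bar c\bar d_i$ over $P$ is definable over $A\cap P$ (Observation \ref{obs:complete_characterization}), and by Fact \ref{10} it is defined by $\Psi_{\psi_i}(\bar z,\bar b_i)$ for a suitable $\bar b_i\in A\cap P$. For this choice of parameters the formula $(\forall\bar z\subseteq P)[\psi_i(\bar x,\bar d_i,\bar z)\equiv\Psi_{\psi_i}(\bar z,\bar b_i)]$ is satisfied by $\bar c$ and therefore belongs to $p$. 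Since also $q\subseteq q^*\subseteq p$, I get $r\subseteq p$, and by monotonicity of the rank (a larger partial type has smaller-or-equal rank, which is the routine induction on $\alpha$ using that every finite subtype of $r$ is a finite subtype of $p$) I conclude $R_A(r,\Delta_1,\Delta_2,\lambda)\ge R_A(p,\Delta_1,\Delta_2,\lambda)=k$. This is exactly what clause (iv) demands, so $R_A(q^*,\Delta_1,\Delta_2,\lambda)\ge k+1$, the desired contradiction; hence $k$ is even.

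The main obstacle is the schema-matching step: clause (iv) insists on the \emph{specific} defining schema $\Psi_{\psi_i}$ attached to $A$ (as in Fact \ref{10}), while the $\psi_i$-type of $\bar c\bar d_i$ naturally lives over the larger complete set $A\cup\bar c$. I would resolve this by noting that $A$ and $A\cup\bar c$ have the same $P$-part and that, under stable embeddedness (Hypothesis \ref{asm:1}), the defining schema for a fixed formula over the stably embedded $P$ is canonical, depending only on the formula with its parameter ranging over $P$; combined with $A\cap P\prec P$, this forces the defining parameter $\bar b_i$ to be found inside $A\cap P$, exactly as required. Everything else --- selecting the correct successor clause according to the parity of $\beta$, and the rank monotonicity --- is bookkeeping.
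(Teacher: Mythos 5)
The paper itself does not prove this statement: it is quoted verbatim (as ``see Fact 5.3 in \cite{ShUs322a}''), so your attempt can only be measured against the intended argument behind that citation. Your reconstruction is correct, and it is the natural (surely the intended) proof: a type in $S_*(A)$ passes the odd-successor clause (iv) ``for free'', because for a realization $\bar c\models p$ the set $A\cup\bar c$ is complete with $P\cap(A\cup\bar c)=P\cap A$, so the defining conditions $(\forall\bar z\subseteq P)[\psi_i(\bar x,\bar d_i,\bar z)\equiv\Psi_{\psi_i}(\bar z,\bar b_i)]$ already belong to $p$ for suitable $\bar b_i\subseteq A\cap P$. Since every finite $q\subseteq q^*$ is a finite subset of $p$, and the successor clauses of the rank only ever inspect finite subsets of the given type (so antitonicity under inclusion of types is immediate, even without induction), the hypothesis $R(p)=k$ gives $R(q\cup\{\text{definability conditions}\})\ge k$; hence, if $k$ were odd, clause (iv) at level $\beta=k$ would yield $R(q^*)\ge k+1$, contradicting $R(q^*)=k$. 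Both hypotheses of the statement are used exactly where they must be.

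Your ``main obstacle'' paragraph points at the right subtlety, but the resolution should be worded more carefully. What one needs is that the schemas $\Psi_\psi$ of Fact \ref{10} can be chosen uniformly: by Hypothesis \ref{asm:1} and compactness, for each $\psi$ there is a single formula (obtained by coding finitely many candidate defining schemas into one, which is possible as soon as $P$ has at least two elements; compare the hypothesis $|A|\ge 2$ in Corollary \ref{14}(2)) such that for \emph{every} tuple $\bar a$ in $\mathcal{C}$, the set $\psi(\bar a,P)$ is defined by an instance of that formula with parameters in $P$. With this choice, the statement ``there is $\bar w\subseteq P$ such that $\Psi_{\psi_i}(\cdot,\bar w)$ defines $tp_{\psi_i}(\bar c\bar d_i/P)$'' holds in $\mathcal{C}$, and it is then the completeness of $A\cup\bar c$ (Observation \ref{8.5}) --- \emph{not} merely $A\cap P\prec P$, since this statement mentions $\bar c$ and $\bar d_i$ and so is not a statement about the structure $P$ alone --- that pulls the witness $\bar b_i$ into $A\cap P$. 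This is a repair of wording rather than of substance; your argument stands.
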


\begin{theorem}\label{13}\label{thm:stablerank}(see Theorem 5.4 in \cite{ShUs322a})

The following are equivalent:
\begin{itemize}
\item[(i)] $A$ is stable.
\item[(ii)] For every finite $\Delta _1$ and finite $n$ there are some
finite $\Delta _2$ and finite $m$ such that 
$R^n_A(\bar x=\bar x,\Delta _1,\Delta _2,2)\leq m$.
\end{itemize}
\end{theorem}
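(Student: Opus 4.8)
The plan is to prove both implications by matching the rank against the number of types in $S_*(A')$, in the spirit of the classical characterization of stability by finiteness of a local rank. The organizing idea is that a type $p \in S_*(A')$ carries two independent kinds of information: its \emph{$P$-trace}, namely the definability schemes of the $\phi$-types $tp_\phi(\bar c / P \cap A')$, which by Fact \ref{10} and stable embeddedness are definable over $A' \cap P$; and its \emph{part orthogonal to $P$}, the purely new content detected by splitting of $\Delta_1$-types. The even clause (iii) of the rank records exactly a binary $\Delta_1$-split (orthogonal content), while the odd clause (iv) records a choice of definition parameter $\bar b \in A' \cap P$ (the $P$-trace). Throughout I will use that, over a complete set, the rank depends only on the first-order theory of the set, so that (ii), stated for $A$, governs every $A' \equiv A$; this is a basic property of the rank, parallel to the theory-invariance of completeness in Lemma \ref{lem:complete_QE}.

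For (ii) $\Rightarrow$ (i), I fix $A' \equiv A$ (complete by Lemma \ref{lem:complete_QE}) and bound $|S_*(A')|$ by $|A'|^{|T|}$. A complete type is determined by its restrictions to the $\le |T|$ many finite sets $\Delta_1$, so it suffices to bound, for each finite $\Delta_1$ and $n$, the number of restrictions $p \restriction \Delta_1$ with $p \in S_*(A')$; the product over all $\Delta_1$ then remains $|A'|^{|T|}$. Given $\Delta_1, n$, choose finite $\Delta_2, m$ as in (ii), so $R^n_{A'}(\bar x = \bar x, \Delta_1, \Delta_2, 2) \le m$. For $p \in S_*(A')$ the rank of $p$ is even (Fact \ref{rankeven}) and $\le m$, so reconstructing $p \restriction \Delta_1$ amounts to following one branch that drives the rank from a value $\le m$ down to $0$: each even step selects one of $\le 2$ explicitly contradictory $\Delta_1$-patterns (a finite total number of choices, as the depth is $\le m$), and each odd step commits to definition parameters $\bar b \in A' \cap P$ for the finitely many $\psi \in \Delta_2$. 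The splitting contributes a finite factor, while the $P$-definition choices contribute at most $|A' \cap P|^{|T|} \le |A'|^{|T|}$ (as in Fact \ref{10}); hence there are at most $|A'|^{|T|}$ restrictions $p \restriction \Delta_1$, as required.

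For (i) $\Rightarrow$ (ii) I argue the contrapositive. By finite determinacy of the rank (Fact \ref{rank}) and binary branching ($\lambda = 2$), for fixed finite $\Delta_1, \Delta_2, n$ the value $R^n_A(\bar x = \bar x, \Delta_1, \Delta_2, 2)$ lies in $\omega \cup \{\infty\}$; in particular it is $\ge \omega$ only when it equals $\infty$. Since enlarging $\Delta_2$ only strengthens the demand in clause (iv), the failure of (ii) for some finite $\Delta_1, n$ gives $R^n_A(\bar x = \bar x, \Delta_1, \Delta_2, 2) = \infty$ for \emph{every} finite $\Delta_2$. A rank of $\infty$ supplies decomposition trees of every finite depth and for every finite $\Delta_2$; splicing these by compactness (via Fact \ref{rank}) and letting $\Delta_2$ exhaust $L(T)$ along the construction, I build a binary tree $\langle r_\eta : \eta \in {}^{<\lambda}2\rangle$ of consistent partial types whose even levels carry mutually contradictory $\Delta_1$-formulas (so distinct branches are explicitly inconsistent) and whose odd levels impose the $P$-definition constraints of clause (iv), forcing each branch to satisfy the weak-orthogonality criterion of Observation \ref{8.5}. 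Collecting the parameters into a complete $A' \equiv A$ of a suitable cardinality — e.g.\ $\lambda = (2^{|T|})^+$, for which $\lambda^{|T|} = \lambda$ by Hausdorff's formula $((2^{|T|})^+)^{|T|} = (2^{|T|})^+$ — the branches extend to more than $|A'|^{|T|}$ distinct members of $S_*(A')$, contradicting stability.

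The main obstacle is the construction in (i) $\Rightarrow$ (ii), where two points require care. First, the rank gives $\infty$ only for each \emph{finite} $\Delta_2$ separately, whereas an honest member of $S_*(A')$ must be weakly orthogonal to $P$ with respect to \emph{all} formulas (Observation \ref{8.5}); reconciling this forces me to let $\Delta_2$ grow along the tree and to pass to a limit, where the preservation of weak orthogonality is exactly what the odd clause (iv) is engineered to secure, and where compactness together with the finite determinacy of Fact \ref{rank} is needed to keep the tree consistent. Second, I must realize the whole tree over a single complete $A' \equiv A$ whose cardinality is small against the number of branches, so that the count genuinely breaks the bound $|A'|^{|T|}$; controlling $|A'|$ versus the branching (typically by packaging the tree as an order configuration) is the delicate remaining bookkeeping.
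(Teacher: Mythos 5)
This theorem is not actually proved in the paper: it is imported verbatim from Theorem 5.4 of \cite{ShUs322a} (originating in \cite{PiSh130}), so your attempt has to stand on its own — and it does not. In the direction (ii) $\Rightarrow$ (i) you cite the right ingredients (Facts \ref{rank} and \ref{rankeven}), but the counting mechanism you describe is wrong. The correct mechanism is: for $p\in S_*(A')$ choose a finite $q\subseteq p$ with $R(q)=R(p)=k$, even; then $q$ alone determines $p\restriction\Delta_1$, because for $\psi\in\Delta_1$ and $\bar c\subseteq A'$ the instance lying in $p$ keeps rank $k$ by monotonicity, and if both $\psi(\bar x,\bar c)$ and $\neg\psi(\bar x,\bar c)$ kept rank $k$, then clause (iii) of Definition \ref{R} with $\mu=1$, $r_0=\{\psi(\bar x,\bar c)\}$, $r_1=\{\neg\psi(\bar x,\bar c)\}$ would force $R(q)\geq k+1$. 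Thus the factor of size $|A'|$ in the count comes from the choice of the finite subtype $q$ (whose parameters lie in $A'$), and the odd clause enters only through evenness (Fact \ref{rankeven}). Your accounting — ``each even step selects one of $\le 2$ patterns (a finite total number of choices), each odd step commits $P$-parameters contributing $|A'|^{|T|}$'' — is backwards: the contradictory ``patterns'' at an even step are $\Delta_1$-types with parameters in $A'$, of which there are far more than finitely many, and you never identify the finite data that pins down $p\restriction\Delta_1$. This direction is repairable along the lines above.

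The direction (i) $\Rightarrow$ (ii) is where the real content of the theorem lies, and your sketch leaves its core undone. (a) The dichotomy ``the rank lies in $\omega\cup\{\infty\}$'' is asserted, not proved; it does not follow formally from Fact \ref{rank}, and for this rank the usual compactness splicing must respect the odd clause, whose witnesses are constrained to lie in $A'\cap P$ and to match the uniform definitions $\Psi_\psi$ of Fact \ref{10}. (b) The cardinal bookkeeping fails in ZFC: your tree is indexed by ${}^{<\lambda}2$ with $\lambda=(2^{|T|})^+$, so it has $2^{2^{|T|}}$ nodes; the set $A'$ must contain their parameters, so there is no reason it can be kept of size $\lambda$, and once $|A'|\geq 2^{2^{|T|}}$ the number of branches, at most $2^{\lambda}$, need not exceed $|A'|^{|T|}\geq 2^{2^{|T|}}$ — the inequality $2^{(2^{|T|})^+}>2^{2^{|T|}}$ is not a theorem of ZFC. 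The standard repair is a tree of height $\mu$, where $\mu$ is least with $2^{\mu}>\lambda$, so that it has $2^{<\mu}\leq\lambda$ nodes. (c) Most importantly, turning the branches into genuine members of $S_*(A')$ — complete types $tp(\bar c/A')$ with $A'\cup\bar c$ complete and $P\cap(A'\cup\bar c)=P\cap A'$, all over a \emph{single} complete set $A'\equiv A$ (elementary equivalence is essential, since stability of $A$ only bounds $|S_*(A')|$ for $A'\equiv A$) — is exactly the hard part of the theorem, and it is precisely what you defer as ``delicate remaining bookkeeping'': letting $\Delta_2$ grow along the levels does not by itself deliver the criterion of Observation \ref{8.5} for all formulas at the limit, nor the completeness of $A'$, nor $A'\equiv A$. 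As it stands, the proposal is an outline of the strategy of \cite{PiSh130} and \cite{ShUs322a}, not a proof.
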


%

\begin{fact} (Corollary 5.5 in \cite{ShUs322a})
	In Definition \ref{6}(iv), it is not necessary to consider all $A' \equiv A$. 
	More specifically, a complete set $A$ is stable if and only if $|S_*(A')| \le |A'|^{|T|}$ for some $A' \equiv A$ saturated, $|A'|>|T|$. 
\end{fact}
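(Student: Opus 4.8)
The plan is to read the statement as an improvement of the defining clause of stability (Definition \ref{dfn:startypes}(ii)), which quantifies over \emph{all} $A'\equiv A$: I want to show that a single saturated witness suffices. The forward implication is immediate. If $A$ is stable then, by definition, $|S_*(A')|\le |A'|^{|T|}$ for every $A'\equiv A$; in particular this holds for any saturated $A'\equiv A$ with $|A'|>|T|$, and at least one such $A'$ exists once we choose its cardinality appropriately (as is standard in this framework). So only the reverse implication carries content.

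For the reverse implication I would argue by contraposition, and the engine is Theorem \ref{thm:stablerank}. Note first that stability, as defined in \ref{dfn:startypes}(ii), depends only on the $\equiv$-class of $A$; since $A'\equiv A$, the set $A$ is stable iff $A'$ is. Assume $|S_*(A')|\le |A'|^{|T|}$ for a saturated $A'\equiv A$ with $|A'|>|T|$, and suppose toward a contradiction that $A$ (equivalently $A'$) is \emph{not} stable. By Theorem \ref{thm:stablerank} the rank condition then fails for $A'$: there are a finite $\Delta_1$ and an $n$ such that for every finite $\Delta_2$ and every $m$ one has $R^n_{A'}(\bar x=\bar x,\Delta_1,\Delta_2,2)>m$. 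I would then produce more than $|A'|^{|T|}$ distinct types in $S_*(A')$, contradicting the hypothesis.

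The construction is a splitting tree built directly from the clauses of Definition \ref{R}, carried out over the fixed saturated model $A'$. Along a branch I alternate stages: at even stages I use clause (iii) to split the current partial type by a pair of explicitly contradictory $\Delta_1$-formulas over $A'$ while keeping the rank high enough to continue; at odd stages I use clause (iv) to adjoin a $P$-definability constraint $(\forall\bar z\subseteq P)[\psi(\bar x,\bar d,\bar z)\equiv\Psi_\psi(\bar z,\bar b)]$ with $\bar b\in P\cap A'$ and the scheme $\Psi_\psi$ from Fact \ref{10}, cycling through all formulas $\psi$ and parameters $\bar d\in A'$. The purpose of the odd stages is that a branch meeting all of them satisfies the criterion of Observation \ref{8.5}, so each branch yields a genuine element of $S_*(A')$, not merely a type over $A'$; distinct branches contain contradictory formulas and hence give distinct types, and each branch is consistent by clause (i) together with compactness. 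The unbounded rank (for every finite $\Delta_2$) guarantees that the splitting never terminates, and it is exactly the saturation of $A'$ and of its $P$-part $P\cap A'$ that lets me realize, at each stage, the splitting parameters $\bar c\in A'$ and the definability parameters $\bar b\in P\cap A'$ demanded by the tree.

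The main obstacle is this bookkeeping together with the final count. One must interleave the $\Delta_1$-splittings and the $P$-definability constraints so that every branch is simultaneously consistent, of sufficient rank at each finite stage, and weakly orthogonal to $P$; and one must then verify that the number of branches exceeds $|A'|^{|T|}$. This last step is where both the saturation of $A'$ and the assumption $|A'|>|T|$ are essential: the saturated $A'$ provides enough room for the tree to grow to full height and hence to have many branches, while $|A'|>|T|$ controls the quantity $|A'|^{|T|}$ on the right-hand side. This is precisely the mechanism already used in the (i)$\Rightarrow$(ii) direction of Theorem \ref{thm:stablerank}; the only new feature is that the whole construction is performed over one prescribed saturated model $A'$ rather than over some auxiliary $A'\equiv A$ produced in the course of that proof.
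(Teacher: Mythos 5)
Two preliminary remarks. First, this paper does not actually prove the Fact: it is imported verbatim from \cite{ShUs322a} (Corollary 5.5 there), stated immediately after the rank characterization, so your overall plan --- trivial forward direction, and for the reverse direction the contrapositive via Theorem \ref{thm:stablerank} together with a splitting tree built from clauses (iii) and (iv) of Definition \ref{R} over the prescribed saturated $A'$, with Observation \ref{8.5} certifying that branches land in $S_*(A')$ --- is indeed the natural route suggested by the surrounding material, and it is very likely the intended one. Second, even your ``immediate'' forward direction is not free: the paper itself stresses (pseudo-finite fields) that saturated models of cardinality $>|T|$ need not exist in ZFC, so ``at least one such $A'$ exists once we choose its cardinality appropriately'' is not something you can wave at; as stated, the left-to-right implication silently presupposes such existence.

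The genuine gap, however, is exactly the step you defer to ``bookkeeping together with the final count.'' For a branch to determine a type in $S_*(A')$ via Observation \ref{8.5}, it must contain a definability constraint $(\forall\bar z\subseteq P)[\psi(\bar x,\bar d,\bar z)\equiv\Psi_\psi(\bar z,\bar b)]$ for \emph{every} formula $\psi$ and \emph{every} parameter tuple $\bar d\subseteq A'$; that is $|T|\cdot|A'|=\lambda$ many constraints, so each branch must have length $\lambda=|A'|$, and a binary tree of height $\lambda$ yields at most $2^\lambda$ types. But $2^\lambda>\lambda^{|T|}$ is not a theorem of ZFC: it is consistent that $\lambda^{|T|}=2^\lambda$ (for instance $\lambda=\aleph_1$, $|T|=\aleph_0$, $2^{\aleph_0}=2^{\aleph_1}$), and in that case \emph{no} set of size $\lambda$ can carry more than $\lambda^{|T|}$ types, so the intended contradiction cannot arise; any correct proof must either show that instability over $P$ together with the existence of a saturated $A'$ of cardinality $\lambda$ excludes such cardinal arithmetic (e.g.\ forces $\lambda^{<\lambda}=\lambda$, whence $\lambda^{|T|}=\lambda<2^\lambda$), or count in some other way. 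The alternative --- keeping branches short (say of length $|T|^+$) and then completing them to members of $S_*(A')$ --- is also not available as you describe it, because the completion device in this paper (Lemma \ref{le:typeextension}, or Observation \ref{obs:typeextension}) requires either $A'\prec\mathcal{C}$ or an ambient $\lambda$-saturated model of $T$ with the same $P$-part, neither of which you have for a mere complete set $A'$ that is saturated as a structure. Finally, a smaller but real point: the negation of Theorem \ref{thm:stablerank}(ii) only gives you rank $\geq\omega$ for each finite $\Delta_2$, while a transfinite tree of height $\lambda$ needs rank $=\infty$ to be preserved at every successor stage (both for the splitting of clause (iii) and for the choice of $\bar b$ in clause (iv)); this preservation is provable by a sup-plus-pigeonhole argument, but it is a step your sketch does not mention, and without it ``the splitting never terminates'' is unjustified.
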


%


\medskip

We  often omit the superscript and the subscript in the rank $R^n_A$, and write simply $R$ (at least when 
$n$ and $A$ are easily deduced from the context).

\medskip

It  follows (see  Corollary 5.6 in \cite{ShUs322a}) that every type $p \in S_*(A)$ over a stable set $A$ is definable internally in $A$:



\begin{co}\label{14} 

\begin{enumerate}
\item
If $A$ is stable, then for every $\psi (\bar
x,\bar y)\in L(T)$ there is $\Psi _\psi$ in $L(A)$ such that if $p\in
S_*(A)$, then for some $\bar b\subseteq A, \Psi _\psi (\bar
y,\bar b)$ defines $p| \psi$ in $\cC_A$. 

Specifically, for every $\c \in A$, $\psi(\x,\c) \in p$ if and only if $A \models \Psi _\psi (\bar
c,\bar b)$.
 \item Moreover, if $|A|\geq 2$, then for every $\psi(\x,\y)$, there is a definition $\Psi_\psi(\x,\y)$ as above which works uniformly for all $B \equiv A$ and $p \in S_*(B)$. 
 \end{enumerate}
\end{co}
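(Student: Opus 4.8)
The plan is to extract the $\psi$-definition of a type $p\in S_*(A)$ from the local rank $R_A(-,\Delta_1,\Delta_2,2)$ of Definition \ref{R}, exploiting the alternation between the ``splitting'' clause (iii) and the ``$P$-definability'' clause (iv). Fix $\psi(\bar x,\bar y)$ and put $\psi$ into $\Delta_1$; by stability and Theorem \ref{thm:stablerank} there are a \emph{finite} $\Delta_2$ and a finite bound $m$ with $R_A(\bar x=\bar x,\Delta_1,\Delta_2,2)\le m$. Given $p\in S_*(A)$, let $k=R_A(p,\Delta_1,\Delta_2,2)\le m$; by Fact \ref{rank} choose a finite $q\subseteq p$ with $R_A(q,\Delta_1,\Delta_2,2)=k$, encoded by a single formula $\chi_q(\bar x,\bar b)$ with $\bar b\subseteq A$, and note $k$ is \emph{even} by Fact \ref{rankeven}.

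The heart of the argument is the equivalence, for $\bar c\subseteq A$,
\[
\psi(\bar x,\bar c)\in p \iff R_A\bigl(q(\bar x)\cup\{\psi(\bar x,\bar c)\},\Delta_1,\Delta_2,2\bigr)\ge k .
\]
The forward direction is immediate from monotonicity: $q\cup\{\psi(\bar x,\bar c)\}\subseteq p$, so its rank is $\ge R_A(p,\dots)=k$. For the converse, suppose the rank on the right is $\ge k$ but $\psi(\bar x,\bar c)\notin p$, so $\neg\psi(\bar x,\bar c)\in p$ and likewise $R_A(q\cup\{\neg\psi(\bar x,\bar c)\},\dots)\ge k$. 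Then $r_0=\{\psi(\bar x,\bar c)\}$ and $r_1=\{\neg\psi(\bar x,\bar c)\}$ are two explicitly contradictory $\Delta_1$-types over $A$ (using $\psi\in\Delta_1$ and $\bar c\subseteq A$), each raising $q$ to rank $\ge k$; since $k$ is even, clause (iii) of Definition \ref{R} with $\mu=1$ yields $R_A(q,\dots)\ge k+1$, contradicting $R_A(q,\dots)=k$. Thus $\psi(\bar x,\bar c)\in p$ is equivalent to a bounded-rank condition, and it remains only to see that this condition is first-order over $A$.

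The main obstacle, then, is to produce a formula $\Psi_\psi(\bar y,\bar b)\in L(A)$ expressing ``$R_A(\chi_q(\bar x,\bar b)\wedge\psi(\bar x,\bar y),\Delta_1,\Delta_2,2)\ge k$''. I would prove, by induction on $k$ (for fixed finite $\Delta_1,\Delta_2$), that for any formula $\chi(\bar x,\bar w)$ the relation $R_A(\chi(\bar x,\bar w),\Delta_1,\Delta_2,2)\ge k$ is defined by a single first-order formula in $\bar w$ evaluated in $A$. The base case is mere consistency of a bounded tuple. At an even successor, clause (iii) with $\mu=1$ asks for one contradictory pair of $\Delta_1$-formulas with parameters \emph{from $A$}, an existential quantifier over $A$ followed by the inductive formula; at an odd successor, clause (iv) asks for definition parameters $\bar b_i\in P\cap A$ together with the condition $(\forall\bar z\subseteq P)\,[\psi_i\equiv\Psi_{\psi_i}(\bar z,\bar b_i)]$, where the $\Psi_{\psi_i}$ come from Fact \ref{10}. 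The only delicate point is that the $P$-quantifiers $\exists\bar b_i\in P$ and $\forall\bar z\subseteq P$ are evaluated over $P\cap A$ rather than $P^{\mathcal C}$; this is exactly guaranteed by completeness of $A$ (Observation \ref{obs:complete_characterization}) together with quantifier elimination, so the relativized formula computed in $A$ agrees with the true rank. Since $\Delta_1,\Delta_2$ and $k\le m$ are finite and $\mu<\lambda=2$ keeps the branching binary, the construction terminates in a genuine first-order $\Psi_\psi$, giving part~(1). For the uniform statement (2), I would observe that $\Delta_2$, the bound $m$, and the auxiliary $\Psi_{\psi_i}$ can all be chosen to depend only on $\mathrm{Th}(\mathcal C|_A)$ and not on $A$ itself: completeness is an elementary property by Lemma \ref{lem:complete_QE}, and the rank is visibly an invariant of the theory. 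Hence the \emph{same} formula $\Psi_\psi$ works for every $B\equiv A$ and every $p\in S_*(B)$, with the parameters $\bar b$ drawn from the relevant $B$ (the hypothesis $|A|\ge 2$ only ensures the contradictory pair in the even step can be named non-trivially).
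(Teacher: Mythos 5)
Your core rank argument is sound and is essentially the standard route (and, as far as one can tell, the route behind the paper's proof, which is only a citation of Corollary 5.6 in \cite{ShUs322a}): put $\psi$ into $\Delta_1$, get a finite $\Delta_2$ and a finite bound $m$ from Theorem \ref{thm:stablerank}, pick a finite $q\subseteq p$ of equal (hence, by Fact \ref{rankeven}, even) rank $k$, and prove $\psi(\bar x,\bar c)\in p \iff R_A(q\cup\{\psi(\bar x,\bar c)\},\Delta_1,\Delta_2,2)\ge k$ by monotonicity in one direction and, in the other, by using the pair $\{\psi(\bar x,\bar c)\},\{\neg\psi(\bar x,\bar c)\}$ in clause (iii) to force $R_A(q,\dots)\ge k+1$. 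That equivalence, and the plan of expressing bounded rank internally in $A$ by induction on $k$ (QE for the consistency base case, finiteness of $\Delta_1,\Delta_2$ and $\lambda=2$ for the successor clauses, quantification over $A$ and $P\cap A$), are all correct.

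The genuine gap is uniformity. Part (1) asserts a \emph{single} formula $\Psi_\psi$ working for every $p\in S_*(A)$, with only the parameters $\bar b$ varying; your formula expresses ``$R_A(\chi_q(\bar x,\bar b)\wedge\psi(\bar x,\bar y),\Delta_1,\Delta_2,2)\ge k$'', whose \emph{shape} depends on $p$ through both $k$ and, more seriously, through the shape of $\chi_q$. The dependence on $k$ is harmless: $k$ ranges over finitely many even values $\le m$, and the formula can compute it itself via a disjunction over ``$R(\chi_q(\bar x,\bar b))=k$ and $R(\chi_q\wedge\psi(\bar x,\bar y))\ge k$''. But $q$ is an arbitrary finite subset of a complete type over $A$, so the formulas occurring in it are of unbounded complexity as $p$ ranges over $S_*(A)$; and in this setting you cannot replace $q$ by a finite $\pm\psi$-subtype as in the classical local-rank argument, because the evenness fact (Fact \ref{rankeven}) is only available for subtypes of the \emph{complete} type $p\in S_*(A)$, which is exactly what makes your converse direction work. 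So what you actually prove is that each individual $p|\psi$ is definable in $A$ --- a per-type statement --- not part (1) as stated, and a fortiori not part (2). Closing this needs a further idea: a compactness/saturation argument (using that stability and completeness are invariants of $Th(\mathcal{C}|_A)$, Lemma \ref{lem:complete_QE}) showing that finitely many shapes of defining formulas suffice across all $B\equiv A$ and $p\in S_*(B)$, followed by coding these finitely many candidates into one formula with auxiliary parameters. That coding is precisely what the hypothesis $|A|\ge 2$ in part (2) is for: two distinct elements allow a case distinction among finitely many definitions to be read off from equalities among parameter coordinates. Your closing remark that $|A|\ge 2$ ``only ensures the contradictory pair in the even step can be named non-trivially'' misreads this --- nothing in the rank computation needs that hypothesis.
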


\section{Stability and primary models}

In this section, we prove the existence of $\lam$-prime models. We will need the following version of stability over $P$:

\begin{de}\label{dfn:fullystable}
We say that a complete set $A$ is \emph{fully stable} over $P$ if any complete $B \subseteq A$ is stable over $P$. We say that $T$ is \emph{fully stable} over $P$ if all its complete sets are (fully) stable. 
\end{de}

\smallskip

Let us recall the definitions of the notions relevant for the discussion in this section. 

\begin{de}

A (partial) type $p$ over a set $A$ is called \emph{$\lam$-isolated} if there exists a subset $r \subseteq p$ with $|r|<\lam$ such that $r \equiv p$. 

So $p$ is isolated if it is $\aleph_0$-isolated. 
\end{de} 

\begin{de}
Let $N$ be a model, $P^N \subseteq B \subseteq N$.
\begin{enumerate}
\item We say that a model $N$ is \emph{$\lam$-prime} over a $B$ if $N$ is $\lam$-saturated, and it can be elementarily 
embedded over $B$ into any $\lam$-saturated model containing $B$.  

\item 

	We say that $N$ is \emph{$\lam$-atomic} over $B$ if 
for every $\bar d\subseteq N$, $tp(\bar d,B)$ is
$\lambda$-isolated over some $B_{\bar d}\subseteq B, |B_{\bar
d}|<\lambda$.
\item
	We say that the sequence $\d = \{d_i:i<\alpha\} \subseteq N$ is a $\lam$-construction over $B$ in $N$ if for all $i<\al$, the type $tp\{d_i/B\cup\{d_j:j<i\})$ is $\lambda$-isolated.

\item
	We say that a set $C \subseteq N$ is $N$ is \emph{$\lam$-constructible} over $B$ in $N$ if 
	there is a $\lam$-construction $\d$ over $B$ in $N$. 
	
	In particular, we say that $N$ is
	\emph{$\lam$-constructible} over $B$ if 
there is a construction $N=B\cup\{d_i:i<\al\}$ such that for all $i<\lam$ the type 
$tp\{d_i/B\cup\{d_j:j<i\})$ is $\lambda$-isolated. 
\item 
	We say that a model $N$ is \emph{$\lambda$-primary} over $B$ if it is $\lam$-constructible and $\lam$-saturated. 
\end{enumerate}
\end{de}

\begin{remark}\label{rem:primary}
\begin{enumerate}
\item
 If $N$ is $\lam$-primary over $B$, then it is $\lam$-prime over $B$.
\item ($\lam$ regular)
 If $N$ is $\lam$-constructible over $B$ witnessed by a construction $N=B\cup\{d_i:i<\lambda\}$, then for every $\al<\lam$, $\tp(\{d_i:i<\al\}/B)$ is $\lam$-isolated. Hence $N$ is $\lam$-atomic over $B$.
\end{enumerate}
\end{remark}
%

%

Recall that the Small Type Extension Lemma (version II) allows us to extend a ``small'' type over a complete set $A$ to a complete type over $A$ orthogonal to $P$. The following Lemma shows that under the assumption of stability over $P$ and $\lam$-saturation, this complete type may be assumed to be $\lam$-isolated.  

\begin{lem}\label{16} 
\begin{enumerate}
\item

Assume $M \models T$ is $\lam$-saturated, $B\subseteq M$ is stable, $P^B = P^M$.
Let  $p(\bar x)$  be
an $m$-type over $B$, $|p(\bar x)|<\lambda$. 

Then there is
$q(\bar x)$ such that $|q(\bar x)|\leq |T|$,
$p(\bar x)\cup q(\bar x)$ consistent, and there is $r\in S_*(B)$ such
that $p(\bar x)\cup q(\bar x)\equiv r(\bar x)$. 

In particular,  $r(\bar x)$ is
$\lambda$-isolated.
\item
The previous clause is also true if $\x$ is an infinite tuple with $<\lam$ variables, but in this case we can only 
require that  $|q|<\lam$.

Specifically, if $|\x| = \ka < \lam$, then there exists $|q| \le |T|\cdot\ka$ as above.

\end{enumerate}

\end{lem}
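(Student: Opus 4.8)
The plan is to first extend $p$ to a complete type weakly orthogonal to $P$, and then to shrink it one formula at a time using the rank. Note first that $B$ is complete by Observation \ref{obs:complete}. Since $M$ is $\lam$-saturated, $P^M \subseteq B \subseteq M$, and $|p|<\lam$, the Small Type Extension Lemma, Version II (Observation \ref{obs:typeextension}) gives some $p^*\in S_*(B)$ extending $p$; we set $r = p^*$ and look for $q\subseteq p^*$ with $p\cup q\equiv p^*$. Since $p\subseteq p^*$ and we will arrange $q=\bigcup_\psi q_\psi\subseteq p^*$ with $|q|\le|T|<\lam$, the set $p\cup q$ is a subtype of $p^*$ of cardinality $<\lam$, so establishing $p\cup q\equiv p^*$ exhibits $p^*$ as $\lam$-isolated.

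To build $q$, fix $\psi(\bar x,\bar y)\in L(T)$. As $B$ is stable, Theorem \ref{thm:stablerank} applied with $\Delta_1=\set{\psi}$ and $n=m$ provides a finite $\Delta_2=\Delta_2^\psi$ and a finite $k_\psi$ with $R_B(\bar x=\bar x,\set{\psi},\Delta_2^\psi,2)\le k_\psi$. Hence $R_B(p^*,\set{\psi},\Delta_2^\psi,2)$ is a finite ordinal, even by Fact \ref{rankeven}, and by Fact \ref{rank} there is a finite $q_\psi\subseteq p^*$ with $R_B(q_\psi,\set{\psi},\Delta_2^\psi,2)=R_B(p^*,\set{\psi},\Delta_2^\psi,2)$. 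Put $q=\bigcup_\psi q_\psi$. For finite $\bar x$ there are only $|T|$ formulas $\psi(\bar x,\bar y)$, and each $q_\psi$ is finite, so $|q|\le|T|$.

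It remains to show that $p\cup q$ determines $p^*$. The equality of ranks says that over $q_\psi$ no further $\set{\psi}$-splitting is possible in the sense of clause (iii) of Definition \ref{R} (with $\lam=2$): two distinct complete $\psi$-types over $B$ extending $q_\psi$ would be explicitly contradictory, and if both had rank $\ge R_B(p^*,\set{\psi},\Delta_2^\psi,2)$ they would witness a rank one higher, contradicting the choice of $q_\psi$. Thus $p^*|\psi$ is the only complete $\psi$-type over $B$ extending $q_\psi$ of this rank (and it does have it, by monotonicity, being a subtype of $p^*$). Combining this with clause (iv), which fixes the relevant definitions over $B\cap P$, and with Fact \ref{rankeven}, any $r'\in S_*(B)$ extending $p\cup q$ should agree with $p^*$ on each $\psi$, whence $r'=p^*$; so $p^*$ is the unique extension of $p\cup q$ inside $S_*(B)$. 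Finally, since $M$ is $\lam$-saturated with $P^M=P^B$, any realization $\c$ of $p\cup q$ lying in $M$ satisfies $P\cap(B\cup\c)=P^B$, so $B\cup\c$ is complete by Observation \ref{obs:complete} and its type over $B$ lies in $S_*(B)$, hence equals $p^*$; this is the sense in which $p\cup q\equiv p^*$. I expect this determination step to be the main obstacle: one must rule out $*$-extensions of $p\cup q$ along which the $\set{\psi}$-rank drops below that of $p^*$, so that the rank genuinely pins the type within $S_*(B)$, and one must check that this is compatible with weak orthogonality to $P$, which is exactly what clause (iv) is there to control.

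For the second clause the argument is the same, only the bookkeeping changes. The Small Type Extension Lemma, Version II applies verbatim to an infinite tuple $\bar x$ with $|\bar x|=\ka<\lam$, producing $p^*\in S_*(B)$. Each formula $\psi(\bar x,\bar y)$ involves only finitely many of the $\ka$ variables of $\bar x$, so the rank computation and the choice of $q_\psi$ are carried out for that finite sub-tuple exactly as before; there are $|T|\cdot\ka$ such formulas, and choosing one finite $q_\psi$ per formula gives $q=\bigcup_\psi q_\psi$ with $|q|\le|T|\cdot\ka=\max(|T|,\ka)<\lam$. The determination argument of the previous paragraph is unchanged, so $p^*$ is $\lam$-isolated by $p\cup q$, as required.
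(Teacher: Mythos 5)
Your strategy reverses the order of the paper's construction, and the reversal is exactly where the argument breaks. The paper builds $q=\bigcup_i q_i$ \emph{before} committing to any completion: at stage $i$, the finite set $q_i$ over $B$ is chosen so that $R(p\cup\bigcup_{j\le i}q_j,\{\psi_i\},\Delta_i,2)$ is \emph{minimal} among all consistent finite extensions by formulas over $B$ (clause (c) of the paper's proof), and only afterwards is $p\cup q$ extended to some $r\in S_*(B)$ via Observation \ref{obs:typeextension}. Minimality guarantees that \emph{no} consistent extension of $p\cup q$ by formulas over $B$ can drop the rank; consequently, if both $\psi_i(\bar x,\bar b)$ and $\neg\psi_i(\bar x,\bar b)$ were consistent with $p\cup q$, both extensions would retain the minimal rank, which is even by Fact \ref{rankeven}, and clause (iii) of Definition \ref{R} (with $\lambda=2$) would push the rank one higher --- a contradiction. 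So $p\cup q$ outright \emph{decides} every instance $\psi_i(\bar x,\bar b)$, $\bar b\subseteq B$, and that is what $p\cup q\vdash r$ means.

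Your $q_\psi$, by contrast, is only required to \emph{achieve} the rank of the completion $p^*$ you fixed in advance: $R(q_\psi)=R(p^*)=k$ rules out two explicitly contradictory extensions \emph{of rank $\ge k$}, hence shows at most that $p^*$ is the unique extension of $p\cup q$ of rank $\ge k$. It does nothing to exclude extensions of rank $<k$, and those need not agree with $p^*$. You flag this yourself (``one must rule out $*$-extensions of $p\cup q$ along which the $\{\psi\}$-rank drops''), but you never supply the argument, and none is available in your setup: the type $p^*$ handed to you by Observation \ref{obs:typeextension} is an \emph{arbitrary} member of $S_*(B)$ extending $p$ and need not be $\lam$-isolated at all, in which case no choice of $q\subseteq p^*$ of size $<\lam$ can witness $p\cup q\equiv p^*$. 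A classical analogue makes the failure vivid: in $ACF$, the generic $1$-type $p^*$ over a large algebraically closed $B$ contains $x=x$, a formula with the same Morley rank and degree as $p^*$, yet $x=x$ certainly does not isolate $p^*$; isolation comes from minimizing rank over \emph{all} formulas consistent with the partial type, which is precisely the paper's clause (c). (A lesser issue: your final step reads $\equiv$ as ``every realization of $p\cup q$ \emph{lying in $M$} realizes $p^*$,'' whereas the definition of $\lam$-isolation requires logical equivalence; had the uniqueness claim in $S_*(B)$ been established, this could be repaired by $\lam$-saturation of $M$, since any consistent deviation from $p^*$ of size $<\lam$ over parameters in $B$ would be realized in $M$ and would have its type in $S_*(B)$. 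But this patch is moot without the main step.)
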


\begin{proof} 

\begin{enumerate}
\item 
Let $\{\psi _i(\bar x,\bar y_i):i<|T|\}$ list all
formulas of $L(T)$. Let $\Delta _i$ be finite such that $R(\bar x=\bar
x,\{\psi _i\},\Delta _i,2)<\omega$ (where $R = R^m_B$). Define $q_i(\bar x)$ by induction
on $i < |T|$ such that
\begin{itemize}
\item[(a)] $q_i$ is finite and is over $B$,
\item[(b)] $p(\bar x)\cup \bigcup _{j\leq i}q_j(\bar x)$ is
consistent, and
\item[(c)] $R(p\cup\bigcup _{j\leq i}q_j,\{\psi _i\},\Delta _i,2)$ is
minimal with respect to (a) and (b).
\end{itemize}

This is possible since $B$ is stable. 


By Observation \ref{extension1}, there is $p^* \in S_*(B)$ extending $p\cup\bigcup _{j\leq i}q_j$. 


Clearly, $R(p^*,
\{\psi _i\},\Delta _i,2) \le R(p\cup\bigcup _{j\leq i}q_j,
\{\psi _i\},\Delta _i,2)$, and for some finite $q' \subseteq p^*$ (see Fact \ref{rank}) we have $R(p^*,
\{\psi _i\},\Delta _i,2) = R(q',
\{\psi _i\},\Delta _i,2)$. If $R(p^*,
\{\psi _i\},\Delta _i,2) < R(p\cup\bigcup _{j\leq i}q_j,
\{\psi _i\},\Delta _i,2)$, setting $q'_i = q_i \cup q'$ would contradict the ``minimality'' of $q_i$ (clause (c) above). Hence $R(p^*,
\{\psi _i\},\Delta _i,2) = R(p\cup\bigcup _{j\leq i}q_j,
\{\psi _i\},\Delta _i,2)$. 


By Fact \ref{rankeven}, $R(p^*,
\{\psi _i\},\Delta _i,2)$ is even, hence so is $R(p\cup\bigcup _{j\leq i}q_j,\{\psi _i\},\Delta _i,2)$.
In particular, by the definition of the rank, we have: 

\begin{itemize}
\item[(d)]
For no $\bar b\subseteq B$ do we have $R(p\cup\bigcup _{j\leq
i}q_j\cup \{\pm\psi_i(\bar x,\bar b)\},\{\psi _i\},\Delta
_i,2)\geq R(p\cup\bigcup _{j\leq i}q_j,\{\psi _i\},\Delta _i,2)$
\end{itemize}

Clearly $q = |\bigcup _{j\leq |T|}q_j|\leq |T|$. 

By Observation \ref{extension1} there is some $r\in S_*(B)$ (realized in $M$)
such that $p\cup\bigcup_{j<|T|}q_j\subseteq r$. By (c) and (d) above it follows that $p\cup\bigcup_{j<|T|}q_j\vdash r$.

\item 
Let $\seq{\psi_i(\x_i, \y_i)\colon i<\ka}$ list all the formulas where $\x_i$ is a finite tuple from $\x$ (so $|T| \le \ka < \lam$), and define $q_i$ on induction on $\ka$ just as in the proof of the previous clause. Since each $q_i$ is finite, $q = |\bigcup _{j\leq |\ka|}q_j|\leq \ka<\lam$, and we can again use Observation   \ref{extension1} in order to obtain $r\in S_*(B)$ as required.
\end{enumerate}

\end{proof}

We are now ready to prove the main result of this paper. 

\begin{theorem}\label{th:primary}
	Assume that $M \models T$ is \lam-saturated and fully stable over $P$, $B \subseteq M$,
	$P^B = P^M$ (so in particular $P^B$ is $\lam$-saturated). 
	
	 Then there is $N \supseteq B$, $N \subseteq M$, $N$ is $\lam$-primary over $B$. 
\end{theorem}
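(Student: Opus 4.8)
The goal is to construct a $\lam$-primary model $N$ over $B$ inside $M$. By definition, this means building a $\lam$-constructible and $\lam$-saturated $N$ with $P^N = P^M$. The natural strategy is a transfinite construction: I would build an increasing continuous chain of complete sets $B = B_0 \subseteq B_\alpha \subseteq M$, at each step adding a tuple realizing a carefully chosen $\lam$-isolated $*$-type, and arrange things so that the union $N$ is both $\lam$-saturated and a model of $T$ while keeping $P^N = P^B$ throughout. The engine of each successor step is Lemma~\ref{16}: given the current complete set $B_\alpha$ (which inherits stability from $M$ since $T$ is fully stable), and any small type I wish to realize, Lemma~\ref{16} hands me a $\lam$-isolated extension in $S_*(B_\alpha)$, realized in $M$. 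Crucially, because the type lies in $S_*(B_\alpha)$, adding its realization keeps the $P$-part fixed ($P^{B_{\alpha+1}} = P^{B_\alpha}$) and keeps the set complete.

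\textbf{What to enumerate.} The construction must simultaneously achieve two things. First, $\lam$-constructibility is automatic if every step realizes a $\lam$-isolated type, which Lemma~\ref{16} guarantees. Second, and harder, I must ensure $N \prec \cC$ (i.e.\ $N$ is a model) and $N$ is $\lam$-saturated. For the model requirement, I would use a Tarski--Vaught-style bookkeeping: enumerate all pairs (formula $\varphi(x,\bar y)$, finite tuple $\bar b$ from the eventual $N$) and, whenever $\cC \models \exists x\, \varphi(x,\bar b)$, schedule the realization of such an $x$; because I am working inside the $\lam$-saturated $M$, each such witness exists in $M$. For $\lam$-saturation, I would enumerate all types over subsets of $N$ of size $<\lam$ and schedule their realization; again Lemma~\ref{16}(2) (the infinite-tuple version, allowing $|q|<\lam$) lets me realize each such small type by a $\lam$-isolated $*$-type over the current $B_\alpha$. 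A standard $\lam^+$-length construction with a suitable dovetailing of these tasks, closing under each requirement cofinally often, produces an $N$ meeting every demand. The continuity of the chain at limits, together with completeness being preserved under unions of complete sets, keeps every $B_\alpha$ complete and keeps $P^{B_\alpha}=P^M$.

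\textbf{The main obstacle.} The delicate point is reconciling the two saturation-type demands while preserving $P^N = P^M$. Realizing an arbitrary small type over $N$ to obtain $\lam$-saturation could, a priori, enlarge the $P$-part; this is exactly why I must route every realization through $S_*(B_\alpha)$ rather than realizing arbitrary types. Lemma~\ref{16} is precisely the tool that converts ``I want to realize this small type'' into ``there is a $\lam$-isolated $*$-type extending it,'' and being a $*$-type is what forbids new elements of $P$ from appearing. So the heart of the argument is checking that the saturation requirements can be phrased in terms of types that, after the Lemma~\ref{16} upgrade, still extend to $*$-types — which holds because the relevant parameter sets have size $<\lam$, matching the hypotheses of Lemma~\ref{16}(2) and Observation~\ref{extension1}. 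Once this is in place, verifying $\lam$-saturation of the union reduces to a routine cofinality/regularity argument (using $\lam > |T|$ and Remark~\ref{rem:primary}(2)), and $N \prec \cC$ follows from the Tarski--Vaught bookkeeping together with completeness of $N$ via Proposition~\ref{8} or directly from the construction.
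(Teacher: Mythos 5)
Your proposal is correct and follows essentially the same route as the paper's own proof: a transfinite construction inside $M$ whose engine is Lemma~\ref{16}, at each step realizing a scheduled small type by a $\lam$-isolated type in $S_*$ of the current (complete, stable, since $T$ is fully stable) set so that the $P$-part never grows, with bookkeeping ensuring the union is $\lam$-saturated. The differences are only organizational: the paper runs the chain until the union happens to be a $\lam$-saturated model (at worst exhausting $M$) rather than fixing length $\lam^+$, and it needs no separate Tarski--Vaught enumeration, since realizing all types over subsets of size $<\lam$ already yields elementarity in $\cC$.
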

\begin{proof}
Let $\ka = |M|$. Construct by induction on $i$ a sequence $\lseq{\d}{i}{\delta}$ (where $\delta \le \ka$ is a limit ordinal) of sequences such that:

 \begin{enumerate}
\item 
	$\d_i = \lseq{d}{\al}{\al_i} \subseteq M$ (where $d_\al$ is a singleton) 
\item
	$\d_i$ is increasing and continuous with $i$ (so in particular $\al_i \le \al_j$ for $i<j)$
\item 
	For every $\al$, the type $\tp(d_\al/B \cup \set{d_\be\colon \be<\al})$ is $\lam$-isolated
\item
	For every $i < \delta$, every $A \subseteq B\cup\set{\d_i}$, $|A|<\lam$, every type over $A$ is realized by some $d_\al$

\end{enumerate}

We shall show that, if we succeed to carry out the construction at every stage, then for some limit ordinal $\delta \le \ka$, $\d_\delta = \bigcup_{i<\delta} \d_i$ is a $\delta$-construction of a $\lam$-saturated model $N$ over $B$.


\bigskip

Let $\d_0 = \seq{}$.

For $i=\delta$ limit, let $\d_\delta = \bigcup_{j<\delta}\d_j$. If $B_\delta = B \cup \d_\delta$ is a $\lam$-saturated model of $T$, then we are done. Note that this is in particular the case if $B_\delta = M$, since $M$ is $\lam$-saturated. If not, there is a type over a subset of $B_\delta$ of size $<\lam$ that is not realized in $B_\delta$ (but is realized in $M$), and we continue the construction. 

For $i = j+1$,  let $B_j = B \cup \d_j$. Let the sequence $\seq{p_{j,\ga} \colon \ga \in [j,\mu)}$ list all the types over subsets of $B_j$ of cardinality $<\lam$. Note that since $M$ is $\lam$-saturated, we have $\mu \le \ka = |M|$.


Now consider the sequence 
$\seq{p_{\ell,j}\colon \ell<i}$. 

Recall that by induction, $\d_j = \lseq{d}{\al}{\al_j}$.  For $\ell<i$ and $\al = \al_j + \ell$, let $d_\al$ realize $p_{\ell,j}$ such that $\tp(d_\al/B \cup \set{d_\be\colon \be<\al}) \in S_*(B \cup \set{d_\be\colon \be<\al})$ is $\lam$-isolated (this is possible by Lemma \ref{16}; note that, since $M$ is fully stable over $P$, the set $B \cup \set{d_\be\colon \be<\al}$ is stable).

Clearly, setting $\al_i = \al_j + i$, the sequence $\d_i = \lseq{d}{\al}{\al_i}$ is as required. 


\end{proof}

\begin{remark}
	The assumption of full stability is not unreasonable. For example, it holds in $ACFA_0$ over $P = Fix(\sigma)$ for any model $M$ with a $\lam$-saturated $P$-part (where $\lam$ is uncountable or $\aleph_\epsilon$). This is easy to see a posteriori from the main result of \cite{Chatzidakis2020RemarksAT} in combination with Theorem 7.3 in \cite{ShUs322a}; but it can also be worked out directly by counting types almost orthogonal to $P$ over the relevant sets in $ACFA_0$. 
	
	At the same time, we believe that this assumption is not necessary, and it would be enough to assume stability for a certain kind of $n$-diagrams (as, e.g., in the last chapter of \cite{Sh:c}). The theory of finite diagrams in this context is currently under development, and we are hoping to return to this question in a future work. 
\end{remark}

Let us state explicitly a consequence of our results that is easy to phrase:

\begin{co}\label{cor:main}
	Let $T$ be fully stable over $P$, $A$ be a complete set with $P^A$ $\lam$-saturated for some $\lam>|T|$. Assume that the class of models 
	$$\mathcal{K} = \set{N\models T \colon P^N = P^A, N \text{ is } \lam-\text{saturated}}$$
	is non-empty. Then $\mathcal{K}$ has a prime member over $P$: there exists $N_0 \in \mathcal{K}$, which is 
	elementarily embeddable into any $N \in \mathcal{K}$ over $P^A$. 
\end{co}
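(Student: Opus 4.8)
The plan is to derive the corollary directly from Theorem \ref{th:primary}, the only real work being to identify the correct base set and to check that the model it produces actually lands in $\mathcal{K}$. Since $\mathcal{K}\neq\emptyset$, I would fix some $M\in\mathcal{K}$; thus $M\models T$ is $\lam$-saturated and $P^M=P^A$. As base I would take $B=P^M\;(=P^A)$ rather than $A$ itself: note that $A$ need not be a subset of the chosen $M$, whereas $P^A=P^M\subseteq M$ certainly is, and the corollary only asks for embeddings over $P^A$.

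Next I would verify the hypotheses of Theorem \ref{th:primary} for this $M$ and $B$. As a model, $M$ is a complete set (Observation \ref{obs:complete}), so full stability of $T$ (Definition \ref{dfn:fullystable}) gives that $M$ is fully stable over $P$; clearly $B=P^M\subseteq M$ and $P^B=P\cap P^M=P^M$. Theorem \ref{th:primary} then yields $N_0$ with $P^M\subseteq N_0\subseteq M$ that is $\lam$-primary over $P^M$, and hence, by Remark \ref{rem:primary}(1), $\lam$-prime over $P^M=P^A$. In particular $N_0\models T$ is $\lam$-saturated, and the inclusions $P^M\subseteq N_0\subseteq M$ squeeze $P^{N_0}$ between $P^M$ and $P^M$, forcing $P^{N_0}=P^A$; thus $N_0\in\mathcal{K}$.

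Finally, the primeness clause does the rest: given any $N\in\mathcal{K}$, the model $N$ is $\lam$-saturated and contains $P^N=P^A=P^M$, so by $\lam$-primeness of $N_0$ over $P^A$ there is an elementary embedding of $N_0$ into $N$ fixing $P^A$ pointwise. This is exactly the asserted prime member over $P$. The genuine difficulty has already been dispatched inside Theorem \ref{th:primary} (the inductive construction of a $\lam$-primary model using Lemma \ref{16} together with full stability); for the corollary itself the only points needing care are the choice of base $P^M$ in place of $A$ and the bookkeeping ensuring $P^{N_0}=P^A$, so that $N_0$ really belongs to $\mathcal{K}$.
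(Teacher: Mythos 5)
Your proposal is correct and is essentially the paper's own (implicit) argument: the paper states Corollary \ref{cor:main} without a written proof as a direct consequence of Theorem \ref{th:primary}, and your derivation --- picking an arbitrary $M\in\mathcal{K}$, applying the theorem with base $B=P^M=P^A$ (legitimate since $M$, being a model, is complete, hence fully stable by Definition \ref{dfn:fullystable}), and invoking Remark \ref{rem:primary}(1) to pass from $\lam$-primary to $\lam$-prime --- is exactly that intended deduction. Your two points of care (using $P^A$ rather than $A$ as the base, since $A$ need not embed in $M$, and verifying $P^{N_0}=P^A$ so that $N_0\in\mathcal{K}$) are both sound and are precisely the bookkeeping the corollary requires.
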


\bibliography{common.bib}
\bibliographystyle{alpha}

\Addresses

\end{document}